\def\Hom{\mathop{\rm Hom}\nolimits}
\def\Ext{\mathop{\rm Ext}\nolimits}
\def\Fac{\mathop{\rm Fac}\nolimits}
\def\id{\mathop{\rm id}\nolimits}
\def\mod{\mathop{\rm mod}\nolimits}
\def\add{\mathop{\rm add}\nolimits}
\def\End{\mathop{\rm End}\nolimits}
\def\tilt{\mathop{\rm tilt}\nolimits}
\begin{document}

\newcommand{\nc}{\newcommand}
\def\PP#1#2#3{{\mathrm{Pres}}^{#1}_{#2}{#3}\setcounter{equation}{0}}
\def\ns{$n$-star}\setcounter{equation}{0}
\def\nt{$n$-tilting}\setcounter{equation}{0}
\def\Ht#1#2#3{{{\mathrm{Hom}}_{#1}({#2},{#3})}\setcounter{equation}{0}}
\def\qp#1{{${(#1)}$-quasi-projective}\setcounter{equation}{0}}
\def\mr#1{{{\mathrm{#1}}}\setcounter{equation}{0}}
\def\mc#1{{{\mathcal{#1}}}\setcounter{equation}{0}}
\def\HD{\mr{Hom}_{\mc{D}}}
\def\HC{\mr{Hom}_{\mc{C}}}
\def\AdT{\mr{Add}_{\mc{T}}}
\def\adT{\mr{add}_{\mc{T}}}
\def\Kb{\mc{K}^b(\mr{Proj}R)}
\def\kb{\mc{K}^b(\mc{P}_R)}
\def\AdpC{\mr{Adp}_{\mc{C}}}


\newtheorem{theorem}{Theorem}[section]
\newtheorem{proposition}[theorem]{Proposition}
\newtheorem{lemma}[theorem]{Lemma}
\newtheorem{corollary}[theorem]{Corollary}
\newtheorem{conjecture}[theorem]{Conjecture}
\newtheorem{question}[theorem]{Question}
\newtheorem{definition}[theorem]{Definition}
\newtheorem{example}[theorem]{Example}

\newtheorem{remark}[theorem]{Remark}
\def\Pf#1{{\noindent\bf Proof}.\setcounter{equation}{0}}
\def\>#1{{ $\Rightarrow$ }\setcounter{equation}{0}}
\def\<>#1{{ $\Leftrightarrow$ }\setcounter{equation}{0}}
\def\bskip#1{{ \vskip 20pt }\setcounter{equation}{0}}
\def\sskip#1{{ \vskip 5pt }\setcounter{equation}{0}}
\def\bg#1{\begin{#1}\setcounter{equation}{0}}
\def\ed#1{\end{#1}\setcounter{equation}{0}}
\def\KET{T^{^F\bot}\setcounter{equation}{0}}
\def\KEC{C^{\bot}\setcounter{equation}{0}}

\renewcommand{\thefootnote}{\fnsymbol{footnote}}
\setcounter{footnote}{0}
%
%


\title{\bf  Support $\tau$-Tilting Modules under Split-by-Nilpotent Extensions
\thanks{This work was partially supported by NSFC (No. 11571164) and a Project Funded
by the Priority Academic Program Development of Jiangsu Higher Education Institutions. } }
\footnotetext{
E-mail:~hpgao07@163.com,~huangzy@nju.edu.cn}
\smallskip
\author{\small Hanpeng Gao, Zhaoyong Huang\thanks{Corresponding author.}\\
{\it \footnotesize Department of Mathematics,  Nanjing University, Nanjing 210093, Jiangsu Province, P.R. China}}
\date{}
\maketitle
\baselineskip 15pt
%
%
\begin{abstract}
Let $\Gamma$ be a split extension of a finite-dimensional algebra $\Lambda$ by a nilpotent bimodule $_\Lambda E_\Lambda$,
and let $(T,P)$ be a pair in $\mod\Lambda$ with $P$ projective. We prove that $(T\otimes_\Lambda \Gamma_\Gamma, P\otimes_\Lambda \Gamma_\Gamma)$
is a support $\tau$-tilting pair in $\mod \Gamma$ if and only if $(T,P)$ is a support $\tau$-tilting pair in $\mod \Lambda$ and
$\Hom_\Lambda(T\otimes_\Lambda E,\tau T_\Lambda)=0=\Hom_\Lambda(P,T\otimes_\Lambda E)$. As applications, we obtain a
necessary and sufficient condition such that $(T\otimes_\Lambda \Gamma_\Gamma, P\otimes_\Lambda \Gamma_\Gamma)$
is support $\tau$-tilting pair for a cluster-tilted algebra $\Gamma$ corresponding to a tilted algebra $\Lambda$;
and we also get that if $T_1,T_2\in\mod\Lambda$ such that $T_1\otimes_\Lambda \Gamma$ and $T_2\otimes_\Lambda \Gamma$
are support $\tau$-tilting $\Gamma$-modules, then $T_1\otimes_\Lambda \Gamma$ is a left mutation of $T_2\otimes_\Lambda \Gamma$
if and only if $T_1$ is a left mutation of $T_2$.
\vspace{10pt}

\noindent {\it 2010 Mathematics Subject Classification}: 16G20, 16E30.


\noindent {\it Key words and phrases}: Support $\tau$-tilting modules, Split-by-nilpotent extensions, Cluster-tilted algebras,
Left mutations, Hasse quivers.

\end{abstract}
%
\vskip 30pt

\section{Introduction}

In this paper, all algebras are finite-dimensional basic algebras over an algebraically closed field $k$.
For an algebra $\Lambda$, $\mod\Lambda$ is the category of finitely generated right $\Lambda$-modules
and $\tau$ is the Auslander-Reiten translation. We write $D:=\Hom_k(-,k)$

Mutation is an operation for a certain class of objects in a fixed category to construct a new object from a
given one by replacing a summand, which is possible only when the given object has two complements.
It is well known that tilting modules are fundamental in tilting theory.
Happel and Unger \cite{H} gave some necessary and sufficient conditions under which
mutation of tilting modules is possible; however, mutation of tilting modules is not always possible.
As a generalization of tilting modules, Adachi, Iyama and Reiten \cite{AIR} introduced support $\tau$-tilting modules
and showed that any almost complete support $\tau$-tilting module has exactly two complements. So, in this case,
mutation is always possible. Moreover, for a 2-Calabi-Yau triangulated category $\mathcal{C}$, it was showed in
\cite{AIR} that there is a close relation between cluster-tilting objects in $\mathcal{C}$ and
support $\tau$-tilting  $\Lambda$-modules, where $\Lambda$ is a 2-Calabi-Yau tilted algebra associated with $\mathcal{C}$.
Then Liu and Xie \cite{LX} proved that a maximal rigid object $T$ in $\mathcal{C}$ corresponds to a support $\tau$-tilting
$\End_{\mathcal{C}}(T)$-module.

Given two algebras $\Lambda$ and $\Gamma$, it is interesting to construct
a (support $\tau$-)tilting $\Gamma$-module from a (support $\tau$-)tilting $\Lambda$-module.
In \cite{AHS}, Assem, Happel and Trepode studied how to extend and restrict tilting modules for one-point extension algebras
by a projective module. Suarez \cite{SP} generalized this result to the case for
support $\tau$-tilting modules. More precisely, let $\Gamma=\Lambda[P]$ be the one-point extension of an algebra $\Lambda$
by a projective $\Lambda$-module $P$ and $e$ the identity of $\Lambda$. If $M_\Lambda$ is a basic support
$\tau$-tilting $\Lambda$-module, then $\Hom_\Gamma(\Gamma e,M_\Lambda)\oplus S$ is a basic support $\tau$-tilting $\Gamma$-module,
where $S$ is the simple module corresponding to the new point; conversely, if $T_\Gamma$ is a basic
support $\tau$-tilting $\Gamma$-module, then $\Hom_\Gamma(e\Gamma,T_\Gamma)$ is a basic support $\tau$-tilting $\Lambda$-module
\cite[Theorem A]{SP}.

Let $\Gamma$ be a split extension of an algebra $\Lambda$ by a nilpotent bimodule $_{\Lambda}E_{\Lambda}$,
that is, there exists a split surjective algebra
morphism $\Gamma\to\Lambda$ whose kernel $E$ is contained in the radical of $\Gamma$ \cite{AN,AZ}.
In particular, all relation extensions \cite{ABR,Z} and one-point extensions are split ones.
There are two functors $-\otimes _\Lambda \Gamma: \mod \Lambda\rightarrow \mod \Gamma$ and
$-\otimes _\Gamma \Lambda: \mod \Gamma\rightarrow \mod \Lambda$. Assem and Marmaridis \cite{AN} investigated the relationship
between (partial) tilting $\Gamma$-modules and (partial) tilting $\Lambda$-modules by using these two functors.
Analogously, we will investigate the relationship between support $\tau$-tilting $\Gamma$-modules and support
$\tau$-tilting $\Lambda$-modules. This paper is organized as follows.

In Section 2, we give some terminology and some preliminary results.

In Section 3, we first prove the following

\begin{theorem}\label{1.1} {\rm (Theorem \ref{3.1})}
Let $\Gamma$ be a split extension of $\Lambda$ by a nilpotent bimodule $_\Lambda E_\Lambda$. If
$(T,P)$ is a pair in $\mod \Lambda$ with $P$ projective, then the following statements are equivalent.
\begin{enumerate}
\item[(1)] $(T\otimes_\Lambda \Gamma_\Gamma,P\otimes_\Lambda \Gamma_\Gamma)$ is a support $\tau$-tilting pair in $\mod \Gamma$.
\item[(2)] $(T,P)$ is a support $\tau$-tilting pair in $\mod \Lambda$ and
$$\Hom_\Lambda(T\otimes_\Lambda E,\tau T_\Lambda)=0=\Hom_\Lambda(P,T\otimes_\Lambda E).$$
\end{enumerate}
\end{theorem}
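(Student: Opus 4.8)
The plan is to work with the standard characterization of support $\tau$-tilting pairs: $(T,P)$ is a support $\tau$-tilting pair in $\mod\Lambda$ precisely when $T$ is $\tau$-rigid, $P$ is projective with $\Hom_\Lambda(P,T)=0$, and $|T|+|P|$ equals the number of simple $\Lambda$-modules. Since a split extension $\Gamma$ of $\Lambda$ has the same number of simple modules (the kernel $E$ lies in the radical, so $\Gamma/\mathrm{rad}\,\Gamma\cong\Lambda/\mathrm{rad}\,\Lambda$), and since $-\otimes_\Lambda\Gamma$ sends indecomposable projectives to indecomposable projectives and is additive, the counting condition $|T|+|P|=|T\otimes_\Lambda\Gamma|+|P\otimes_\Lambda\Gamma|$ transfers automatically in both directions, and likewise $P\otimes_\Lambda\Gamma$ is projective in $\mod\Gamma$. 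So the real content is: (a) $T\otimes_\Lambda\Gamma$ is $\tau$-rigid over $\Gamma$ iff $T$ is $\tau$-rigid over $\Lambda$ and $\Hom_\Lambda(T\otimes_\Lambda E,\tau T_\Lambda)=0$; and (b) $\Hom_\Gamma(P\otimes_\Lambda\Gamma,T\otimes_\Lambda\Gamma)=0$ iff $\Hom_\Lambda(P,T)=0$ and $\Hom_\Lambda(P,T\otimes_\Lambda E)=0$.

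For step (b), I would use the adjunction $\Hom_\Gamma(P\otimes_\Lambda\Gamma, X)\cong\Hom_\Lambda(P, X\otimes_\Gamma\Lambda)$ — or more directly, since $P$ is projective, reduce to summands of $\Lambda$ itself and compute $\Hom_\Gamma(e\Gamma, T\otimes_\Lambda\Gamma)\cong (T\otimes_\Lambda\Gamma)e$ for an idempotent $e$. As a $\Lambda$-module, $\Gamma\cong\Lambda\oplus E$ (the split exact sequence $0\to E\to\Gamma\to\Lambda\to 0$ splits as right $\Lambda$-modules), so $T\otimes_\Lambda\Gamma\cong T\oplus(T\otimes_\Lambda E)$ as $\Lambda$-modules, and the vanishing of $\Hom_\Gamma(P\otimes_\Lambda\Gamma,T\otimes_\Lambda\Gamma)$, restricted along $\Lambda\hookrightarrow\Gamma$, splits into the two stated conditions; conversely one checks that the $\Gamma$-module structure adds no new morphisms once both $\Lambda$-level Hom's vanish. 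This part should be essentially formal.

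The main obstacle is step (a), the $\tau$-rigidity transfer, because $\tau$ does not behave simply under $-\otimes_\Lambda\Gamma$. Here I would avoid computing $\tau_\Gamma(T\otimes_\Lambda\Gamma)$ directly and instead use the Auslander–Reiten formula reformulation of $\tau$-rigidity: $T$ is $\tau$-rigid over $\Lambda$ iff for a minimal projective presentation $P_1\xrightarrow{f} P_0\to T\to 0$ the induced map $\Hom_\Lambda(P_0,T)\to\Hom_\Lambda(P_1,T)$ is surjective (equivalently $\Hom_\Lambda(T,\tau T)=\overline{\Hom}_\Lambda(T,\tau T)=0$ expressed via $\Ext$/projective presentations — concretely, $\tau$-rigid $\iff \Hom_\Lambda(f,T)$ epi). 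Applying $-\otimes_\Lambda\Gamma$ to the presentation of $T$ gives a projective presentation $P_1\otimes_\Lambda\Gamma\xrightarrow{f\otimes\Gamma}P_0\otimes_\Lambda\Gamma\to T\otimes_\Lambda\Gamma\to 0$ over $\Gamma$ (not necessarily minimal, but $\tau$-rigidity can be tested on any projective presentation). Then $\Hom_\Gamma(P_i\otimes_\Lambda\Gamma, T\otimes_\Lambda\Gamma)\cong\Hom_\Lambda(P_i, T\otimes_\Lambda\Gamma)\cong\Hom_\Lambda(P_i,T)\oplus\Hom_\Lambda(P_i, T\otimes_\Lambda E)$, using $T\otimes_\Lambda\Gamma\cong T\oplus(T\otimes_\Lambda E)$ as $\Lambda$-modules and $P_i$ projective. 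So $\Hom_\Gamma(f\otimes\Gamma, T\otimes_\Lambda\Gamma)$ is surjective iff both $\Hom_\Lambda(f,T)$ and $\Hom_\Lambda(f, T\otimes_\Lambda E)$ are surjective. The first is exactly $\tau$-rigidity of $T$ over $\Lambda$; the second, by the Auslander–Reiten formula, is exactly $\Hom_\Lambda(T\otimes_\Lambda E,\tau T_\Lambda)=0$ (identifying the cokernel of $\Hom_\Lambda(P_0,N)\to\Hom_\Lambda(P_1,N)$ with $D\overline{\Hom}_\Lambda(N,\tau T)$, and noting $\overline{\Hom}=\Hom$ here since a surjection onto $\tau T$ from an object with no nonzero projective summand — or handling the projective-summand correction as in \cite{AIR}). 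I expect the delicate point to be making this last identification clean when the presentation $P_1\to P_0\to T\to 0$ is minimal over $\Lambda$ but $f\otimes\Gamma$ need not be minimal over $\Gamma$; I would resolve it by noting that testing $\Hom(f,-)$ surjective is presentation-independent once we fix $T$, or by splitting off the redundant projective summands of $P_i\otimes_\Lambda\Gamma$ explicitly and checking they contribute nothing. Assembling (a) and (b) with the automatic counting and projectivity statements then yields the theorem.
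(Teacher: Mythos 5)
Your reduction of the theorem to (a) the $\tau$-rigidity transfer and (b) the $\Hom(P,-)$ condition, together with the counting argument via $|\Lambda|=|\Gamma|$, matches the skeleton of the paper's proof, and your step (b) is exactly the paper's second adjunction computation. For step (a), however, the paper takes a shorter route than yours: it quotes the formula $\tau(M\otimes_\Lambda \Gamma)\cong \Hom_\Lambda({}_\Gamma\Gamma_\Lambda,\tau M_\Lambda)$ (Lemma \ref{2.9}, from Assem--Marmaridis), after which a single adjunction gives the direct sum decomposition $\Hom_\Gamma(T\otimes_\Lambda \Gamma,\tau(T\otimes_\Lambda \Gamma))\cong \Hom_\Lambda(T,\tau T)\oplus \Hom_\Lambda(T\otimes_\Lambda E,\tau T)$, so both implications of (a) are immediate. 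Your route through projective presentations deliberately avoids this formula, which is a legitimate alternative --- but it is exactly where your argument has a gap.

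The gap is in the direction $(1)\Rightarrow(2)$ of your step (a). Your first proposed fix --- that surjectivity of $\Hom(f,Y)$ can be tested on any projective presentation of $T\otimes_\Lambda\Gamma$ --- is false: enlarging $P_1$ by a projective summand $Q'$ mapped to zero still gives a projective presentation, but $\Hom(f\oplus 0,Y)$ is then surjective only if in addition $\Hom(Q',Y)=0$. This is precisely why Lemma \ref{2.3} is asymmetric: ``epic $\Rightarrow$ vanishing'' holds for any presentation, while the converse requires minimality. Your second proposed fix, ``split off the redundant summands and check they contribute nothing,'' begs the question: a redundant summand $Q'$ of $P_1\otimes_\Lambda\Gamma$ lying in the kernel of $f\otimes 1_\Gamma$ would contribute a term $\Hom_\Gamma(Q',T\otimes_\Lambda\Gamma)$ that has no reason to vanish. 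What actually closes the gap is the stronger statement that $-\otimes_\Lambda\Gamma$ sends the minimal presentation $P_1\stackrel{f}{\to}P_0\to T\to 0$ to a \emph{minimal} presentation over $\Gamma$. This is true for split-by-nilpotent extensions and is provable with the tools already in play: since $E\subseteq \mathrm{rad}\,\Gamma$ one has $\Gamma/\mathrm{rad}\,\Gamma\cong \Lambda/\mathrm{rad}\,\Lambda$, so $-\otimes_\Lambda\Gamma$ preserves tops and hence projective covers; thus $P_0\otimes_\Lambda\Gamma\to T\otimes_\Lambda\Gamma$ is a projective cover and any failure of minimality is concentrated in a summand $Q'$ of $P_1\otimes_\Lambda\Gamma$ with $(f\otimes 1_\Gamma)|_{Q'}=0$. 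Applying the retraction $-\otimes_\Gamma\Lambda$ and using $-\otimes_\Lambda\Gamma\otimes_\Gamma\Lambda\cong 1_{\mod\Lambda}$ would then produce a nonzero projective direct summand of $P_1$ annihilated by $f$, contradicting the minimality of $f$ over $\Lambda$. With this lemma inserted, your argument goes through; without it, the implication $(1)\Rightarrow(2)$ is not established.
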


As a consequence, we get that if $\Gamma$ is a cluster-tilted algebra corresponding to a tilted algebra $\Lambda$
and $(T,P)$ is a pair in $\mod \Lambda$ with $P$ projective, then $(T\otimes_\Lambda \Gamma_\Gamma, P\otimes_\Lambda \Gamma_\Gamma)$
is a support $\tau$-tilting pair in $\mod \Gamma$ if and only if $(T,P)$ is a support $\tau$-tilting pair in $\mod \Lambda$ and
$\Hom_\Lambda(\tau^{-1}\Omega^{-1}T_\Lambda,\tau T_\Lambda)=0=\Hom_\Lambda(P,\tau^{-1}\Omega^{-1}T_\Lambda)$ (Proposition \ref{3.4}).

Moreover, we have the following

\begin{theorem}\label{1.2} {\rm (Theorem \ref{3.10})}
Let $\Gamma$ be a split extension of $\Lambda$ by a nilpotent bimodule $_\Lambda E_\Lambda$.
Let $T_1,T_2\in\mod\Lambda$ such that $T_1\otimes_\Lambda \Gamma$ and $T_2\otimes_\Lambda \Gamma$ are support $\tau$-tilting $\Gamma$-modules.
Then the following statements are equivalent.
\begin{enumerate}
\item[(1)] $T_1\otimes_\Lambda \Gamma$ is a left mutation of $T_2\otimes_\Lambda \Gamma$.
\item[(2)] $T_1$ is a left mutation of $T_2$.
\end{enumerate}
\end{theorem}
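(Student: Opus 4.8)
\medskip

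\noindent The plan is to reduce both implications to the exchange sequences defining $\tau$-tilting mutation in \cite{AIR}, and to transport them between $\mod\Lambda$ and $\mod\Gamma$ along the functors $-\otimes_\Lambda\Gamma$ and $-\otimes_\Gamma\Lambda$. We shall use the following standard facts about the split extension $\Gamma$ of $\Lambda$ by the nilpotent bimodule $E$: (i) $(-\otimes_\Lambda\Gamma)\otimes_\Gamma\Lambda\cong\operatorname{id}_{\mod\Lambda}$ naturally, so $-\otimes_\Lambda\Gamma$ is faithful and reflects isomorphisms; (ii) because $E$ is nilpotent, $-\otimes_\Lambda\Gamma$ preserves indecomposability and induces a bijection between the indecomposable summands of any $N\in\mod\Lambda$ and those of $N\otimes_\Lambda\Gamma$, preserving ``projective'', ``non-projective'' and ``basic'', and $\operatorname{top}_\Gamma(N\otimes_\Lambda\Gamma)$ corresponds to $\operatorname{top}_\Lambda N$ under $\Gamma/\operatorname{rad}\Gamma=\Lambda/\operatorname{rad}\Lambda$; (iii) the adjunction isomorphisms $\Hom_\Gamma(W\otimes_\Lambda\Gamma,N)\cong\Hom_\Lambda(W,N|_\Lambda)$ with $(W\otimes_\Lambda\Gamma)|_\Lambda\cong W\oplus(W\otimes_\Lambda E)$, and $\Hom_\Lambda(N\otimes_\Gamma\Lambda,V)\cong\Hom_\Gamma(N,V_\Gamma)$, where $V_\Gamma$ denotes $V$ viewed as a $\Gamma$-module via $\Gamma\to\Lambda$. (One also checks easily, pushing and pulling surjections along these right-exact functors, that $\Fac T'\subseteq\Fac T$ if and only if $\Fac_\Gamma(T'\otimes_\Lambda\Gamma)\subseteq\Fac_\Gamma(T\otimes_\Lambda\Gamma)$, strictly so, for $T,T'$ with both tensor products support $\tau$-tilting.) Recall finally that the Hasse quiver of support $\tau$-tilting pairs is the mutation quiver, and that a left mutation $(T_1,P_1)=\mu^-_X(T_2,P_2)$ at an indecomposable summand $X$ of $T_2$ (put $M':=T_2/X$) is governed by a minimal left $\add M'$-approximation $f\colon X\to M_0$: in the main case $0\to X\xrightarrow{f}M_0\to X^1\to 0$ is exact with $X^1$ indecomposable and $\mu^-_X(T_2,P_2)$ equals $(M'\oplus X^1,P_2)$ or $(M',P_2\oplus X^1)$ according as $X^1$ is non-projective or projective, while in the degenerate case $\Hom_\Lambda(X,M')=0$ one has $\mu^-_X(T_2,P_2)=(M',P_2\oplus P(X))$.

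\medskip
For $(2)\Rightarrow(1)$, let $(T_1,P_1)=\mu^-_X(T_2,P_2)$ and take the exchange sequence $0\to X\xrightarrow{f}M_0\to X^1\to 0$ (treating the main case with $X^1$ non-projective; the others are analogous). Applying $-\otimes_\Lambda\Gamma$ gives an exact sequence $X\otimes_\Lambda\Gamma\xrightarrow{f\otimes\Gamma}M_0\otimes_\Lambda\Gamma\to X^1\otimes_\Lambda\Gamma\to0$ with $M_0\otimes_\Lambda\Gamma\in\add(M'\otimes_\Lambda\Gamma)$ and $X\otimes_\Lambda\Gamma$ indecomposable, not in $\add(M'\otimes_\Lambda\Gamma)$. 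I claim $f\otimes\Gamma$ is a minimal left $\add(M'\otimes_\Lambda\Gamma)$-approximation. By (iii) the map $(f\otimes\Gamma)^*$ between the relevant Hom-spaces is identified with $f^*$ on $\Hom_\Lambda(-,M')\oplus\Hom_\Lambda(-,M'\otimes_\Lambda E)$; surjectivity on the first summand is the approximation property of $f$, and on the second summand the cokernel of $f^*$ embeds into $\Ext^1_\Lambda(X^1,M'\otimes_\Lambda E)\cong D\overline{\Hom}_\Lambda(M'\otimes_\Lambda E,\tau_\Lambda X^1)$, which is a direct summand of $D\overline{\Hom}_\Lambda(T_1\otimes_\Lambda E,\tau_\Lambda T_1)$ because $M'$ and $X^1$ are summands of $T_1=M'\oplus X^1$; the latter vanishes since $T_1\otimes_\Lambda\Gamma$ is support $\tau$-tilting, so $\Hom_\Lambda(T_1\otimes_\Lambda E,\tau_\Lambda T_1)=0$ by Theorem \ref{1.1}. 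Minimality is immediate: if $f\otimes\Gamma$ were not left minimal, $\operatorname{coker}(f\otimes\Gamma)=X^1\otimes_\Lambda\Gamma$ would have a nonzero summand in $\add(M'\otimes_\Lambda\Gamma)$, and being indecomposable it would itself lie in $\add(M'\otimes_\Lambda\Gamma)$, forcing $X^1\in\add M'$ by (ii) and contradicting that $M'\oplus X^1$ is basic. Hence $\mu^-_{X\otimes\Gamma}(T_2\otimes_\Lambda\Gamma,P_2\otimes_\Lambda\Gamma)=\bigl((M'\otimes_\Lambda\Gamma)\oplus(X^1\otimes_\Lambda\Gamma),\,P_2\otimes_\Lambda\Gamma\bigr)=(T_1\otimes_\Lambda\Gamma,P_1\otimes_\Lambda\Gamma)$, i.e.\ $T_1\otimes_\Lambda\Gamma$ is a left mutation of $T_2\otimes_\Lambda\Gamma$.

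\medskip
For $(1)\Rightarrow(2)$, write $T_1\otimes_\Lambda\Gamma=\mu^-_Z(T_2\otimes_\Lambda\Gamma)$; by (ii), $Z\cong X\otimes_\Lambda\Gamma$ for a unique indecomposable summand $X$ of $T_2$, and $M':=T_2/X$ satisfies $M'\otimes_\Lambda\Gamma=(T_2\otimes_\Lambda\Gamma)/(X\otimes_\Lambda\Gamma)$. The $\Gamma$-exchange sequence (main case) is $0\to X\otimes_\Lambda\Gamma\xrightarrow{g}N_0\to Z^1\to0$ with $N_0\in\add(M'\otimes_\Lambda\Gamma)$, $g$ a minimal left approximation, $Z^1$ indecomposable non-projective, and $T_1\otimes_\Lambda\Gamma=(M'\otimes_\Lambda\Gamma)\oplus Z^1$. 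Applying $-\otimes_\Gamma\Lambda$ and using (i) and (ii) yields an exact sequence $X\xrightarrow{\bar g}\bar N_0\to\bar Z^1\to0$ with $\bar g=g\otimes_\Gamma\Lambda$, $\bar N_0=N_0\otimes_\Gamma\Lambda\in\add M'$, $\bar Z^1=Z^1\otimes_\Gamma\Lambda$ indecomposable non-projective, and $T_1=M'\oplus\bar Z^1$. Here $\bar g$ is again a left $\add M'$-approximation: by the adjunction for $-\otimes_\Gamma\Lambda$ it suffices that every $\varphi\colon X\otimes_\Lambda\Gamma\to M'_\Gamma$ factor through $g$, and $\varphi$ lifts along the canonical surjection $\pi\colon M'\otimes_\Lambda\Gamma\to M'_\Gamma$ because $\ker\pi\in\Fac_\Gamma(M'\otimes_\Lambda\Gamma)\subseteq\Fac_\Gamma(T_2\otimes_\Lambda\Gamma)$, so that $\Ext^1_\Gamma(X\otimes_\Lambda\Gamma,\ker\pi)\cong D\overline{\Hom}_\Gamma(\ker\pi,\tau_\Gamma(X\otimes_\Lambda\Gamma))=0$ by the $\tau$-rigidity of $T_2\otimes_\Lambda\Gamma$; such a lift factors through $g$, hence so does $\varphi$. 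Minimality of $\bar g$ follows as before: a nonzero summand of $\bar N_0$ annihilated by $\bar g$ would give a nonzero summand of $\bar Z^1$ in $\add M'$, whence the indecomposable $\bar Z^1$ lies in $\add M'$, so $Z^1\cong\bar Z^1\otimes_\Lambda\Gamma\in\add(M'\otimes_\Lambda\Gamma)$ and $T_1\otimes_\Lambda\Gamma=(M'\otimes_\Lambda\Gamma)\oplus Z^1$ would fail to be basic. Therefore $\bar g$ is a minimal left $\add M'$-approximation with cokernel $\bar Z^1$, so $\mu^-_X(T_2,P_2)=(M'\oplus\bar Z^1,P_2)=(T_1,P_1)$ and $T_1$ is a left mutation of $T_2$.

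\medskip
The exceptional cases are handled in the same spirit. When $X^1$ (resp.\ $Z^1$) is projective, one only keeps track, via (ii), of which indecomposable summand moves into the projective part. In the degenerate case $\Hom_\Lambda(X,M')=0$ one observes that $\Hom_\Gamma(X\otimes_\Lambda\Gamma,M'\otimes_\Lambda\Gamma)\cong\Hom_\Lambda(X,M')\oplus\Hom_\Lambda(X,M'\otimes_\Lambda E)$ vanishes --- the second summand because $P(X)\to X$ is an epimorphism and $\Hom_\Lambda(P(X),M'\otimes_\Lambda E)$ is a summand of $\Hom_\Lambda(P_1,T_1\otimes_\Lambda E)=0$ (Theorem \ref{1.1} applied to $(T_1,P_1)$) --- so the mutation is degenerate on both sides, with $P_\Gamma(X\otimes_\Lambda\Gamma)\cong P(X)\otimes_\Lambda\Gamma$ by (ii). I expect the main obstacle to be precisely the approximation-preservation step: that $-\otimes_\Lambda\Gamma$, respectively $-\otimes_\Gamma\Lambda$, sends a minimal left approximation to a minimal left approximation. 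After the adjunction reductions this comes down to the two $\Ext^1$-vanishing statements above, and there the hypothesis of Theorem \ref{1.1} (entering through $\Hom_\Lambda(T_1\otimes_\Lambda E,\tau_\Lambda T_1)=0$) and the $\tau$-rigidity of $T_2\otimes_\Lambda\Gamma$ (entering through $\ker\pi\in\Fac_\Gamma(M'\otimes_\Lambda\Gamma)$) are exactly what makes it work; the bookkeeping in the exceptional mutation cases is the only other, much easier, point to check.
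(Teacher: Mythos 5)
Your strategy---transporting the AIR exchange sequences along $-\otimes_\Lambda\Gamma$ and $-\otimes_\Gamma\Lambda$---is genuinely different from, and much heavier than, what the statement requires, and as written it contains a false step. In the direction $(1)\Rightarrow(2)$ you lift a map $\varphi\colon X\otimes_\Lambda\Gamma\to M'_\Gamma$ along $\pi\colon M'\otimes_\Lambda\Gamma\to M'_\Gamma$ by asserting that $\ker\pi\in\Fac_\Gamma(M'\otimes_\Lambda\Gamma)$. But $\ker\pi\cong M'\otimes_\Lambda E=(M'\otimes_\Lambda\Gamma)E$ is a submodule of the radical of $M'\otimes_\Lambda\Gamma$, and neither $\Fac$ nor a torsion class is closed under submodules. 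Concretely, in the situation of Example \ref{3.8} with $M'=e_2\Lambda$ one has $M'\otimes_\Lambda\Gamma\cong e_2\Gamma$ with top $S_2$, while $M'\otimes_\Lambda E\cong e_1\Gamma$ has top $S_1$, so it cannot be an epimorphic image of any $(e_2\Gamma)^{(n)}$; hence the claimed membership fails in general. The vanishing of $\Ext^1_\Gamma(X\otimes_\Lambda\Gamma,M'\otimes_\Lambda E)$ that you actually need can be rescued---by the Auslander--Reiten formula it is a quotient of $D\Hom_\Gamma(M'\otimes_\Lambda E,\tau(X\otimes_\Lambda\Gamma))$, which by Lemma \ref{2.9} and adjunction is a quotient of $D\Hom_\Lambda(M'\otimes_\Lambda E,\tau X)$, and this vanishes by Theorem \ref{3.1} applied to $(T_2,P_2)$---but that is not the argument you gave. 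A second, smaller, problem occurs in $(2)\Rightarrow(1)$: you treat the exchange sequence as short exact, $0\to X\to M_0\to X^1\to 0$, and use this to embed the cokernel of $f^*$ on the summand $\Hom_\Lambda(-,M'\otimes_\Lambda E)$ into $\Ext^1_\Lambda(X^1,M'\otimes_\Lambda E)$; the mutation theory of \cite{AIR} only provides $X\stackrel{f}{\to}M_0\to X^1\to 0$ with $f$ a minimal left approximation, not injectivity of $f$, so this step needs a separate justification or a workaround.

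The deeper point is that none of this machinery is needed, because the hypothesis already places both $T_1\otimes_\Lambda\Gamma$ and $T_2\otimes_\Lambda\Gamma$ among the support $\tau$-tilting $\Gamma$-modules. The paper argues as follows: by Lemma \ref{2.8}(1) the indecomposable summands of $(T_i\otimes_\Lambda\Gamma,P_i\otimes_\Lambda\Gamma)$ correspond bijectively to those of $(T_i,P_i)$, so the two pairs share an almost complete support $\tau$-tilting pair $(U\otimes_\Lambda\Gamma,Q\otimes_\Lambda\Gamma)$ on one side exactly when they share $(U,Q)$ on the other; since an almost complete pair has exactly two complements, ``being mutations of each other'' transfers for free, with no approximation to compute. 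It then remains only to transfer the condition $X\notin\Fac U$ of Definition \ref{2.5}, which follows because the right exact functors $-\otimes_\Lambda\Gamma$ and $-\otimes_\Gamma\Lambda$ carry an epimorphism $U^{(n)}\to X$ to an epimorphism in the other category and compose to the identity on $\mod\Lambda$. You should either adopt this short argument or repair the two steps above (and then still handle the projective and degenerate cases of the exchange sequence with the same care).
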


The Hasse (exchange) quiver $Q(s\tau$-$\tilt \Lambda)$ of $\Lambda$ consists of the set of vertices which are support $\tau$-tilting $\Lambda$-modules $T$
and those  arrows from $T$ to its left mutation. So Theorem \ref{1.2} shows that if $T_1,T_2\in\mod\Lambda$ such that $T_1\otimes_\Lambda \Gamma$
and $T_2\otimes_\Lambda \Gamma$ are support $\tau$-tilting $\Gamma$-modules, then there exists an arrow from $T_1\otimes_\Lambda \Gamma$
to $T_2\otimes_\Lambda \Gamma$ in $Q(s\tau$-$\tilt \Gamma)$ if and only if there exists an arrow from $T_1$ to $T_2$ in $Q(s\tau$-$\tilt \Lambda)$.

In Section 4, we give two examples to illustrate our results.

\section{Preliminaries}


Let $\Lambda$ be an algebra. For a module $M\in\mod \Lambda$, $|M|$ is the number of pairwise non-isomorphic direct summands of $M$,
$\add M$ is the full subcategory of $\mod \Lambda$ consisting of modules isomorphic to direct summands
of finite direct sums of copies of $M$, and $\Fac M$ is the full subcategory of $\mod \Lambda$ consisting
of modules isomorphic to factor modules of finite direct sums of copies of $M$. The injective dimension
and the first cosyzygy of $M$ are denoted by $\id_{\Lambda}M$ and $\Omega^{-1}M$ respectively.

\subsection{$\tau$-tilting theory}

\begin{definition}\label{2.1} {\rm (\cite[Definition 0.1]{AIR})
A module $M\in\mod\Lambda$ is called
\begin{enumerate}
\item[(1)] {\it $\tau$-rigid} if $\Hom_\Lambda(M,\tau M)=0$;
\item[(2)] {\it $\tau$-tilting} (respectively, {\it almost complete $\tau$-tilting}) if it is $\tau$-rigid and
$|M|=|\Lambda|$ (respectively, $|M|=|\Lambda|-1$);
\item[(3)] {\it support $\tau$-tilting} if it is a $\tau$-tilting $\Lambda/<e>$-module for some idempotent $e$ of $\Lambda$.
\end{enumerate}}
\end{definition}

The next result shows a $\tau$-rigid module may be extended to a $\tau$-tilting module.

\begin{theorem}\label{2.2} {\rm (\cite[Theorem 2.10]{AIR})}
Any basic $\tau$-rigid $\Lambda$-module is a direct summand of a $\tau$-tilting $\Lambda$-module.
\end{theorem}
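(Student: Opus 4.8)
The statement is the $\tau$-tilting analogue of Bongartz's completion lemma, so the plan is to imitate Bongartz's construction. Write $U$ for the given basic $\tau$-rigid module; it is enough to produce a (possibly zero) module $E$, having no indecomposable summand in common with $U$, such that $U\oplus E$ is a basic $\tau$-tilting module, since then $U$ is a direct summand of $U\oplus E$. I would begin by recording the homological description of $\tau$-rigidity that tells us what to aim for. Fix a minimal projective presentation $P_1\xrightarrow{\,p\,}P_0\to U\to 0$. The Auslander--Reiten formula gives, for every $N\in\mod\Lambda$, a natural isomorphism
$$\Hom_\Lambda(N,\tau U)\;\cong\;D\,\Coker\!\big(\Hom_\Lambda(P_0,N)\xrightarrow{\Hom_\Lambda(p,N)}\Hom_\Lambda(P_1,N)\big),$$
so that $N$ lies in ${}^{\perp}(\tau U):=\{N\in\mod\Lambda:\Hom_\Lambda(N,\tau U)=0\}$ exactly when $\Hom_\Lambda(p,N)$ is surjective. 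From this description one checks that ${}^{\perp}(\tau U)$ is closed under quotients, extensions and finite direct sums (the extension closure being a snake-lemma argument), hence is a torsion class; that $\Fac U\subseteq{}^{\perp}(\tau U)$, because $U$ is $\tau$-rigid, by applying $\Hom_\Lambda(-,\tau U)$ to a surjection $U^{m}\to N$; and that $U$ is $\Ext$-projective in this torsion class, since $\Ext^1_\Lambda(U,N)$ is a quotient of $D\Hom_\Lambda(N,\tau U)$ by the Auslander--Reiten formula.

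The core step is the Bongartz completion itself, namely enlarging $U$ to a module $T$ with $\add T$ equal to the class of $\Ext$-projective objects of ${}^{\perp}(\tau U)$. I would do this explicitly. Choose generators $h_1,\dots,h_n$ of the $\Lambda$-module $\Coker\!\big(\Hom_\Lambda(P_0,\Lambda)\xrightarrow{\Hom_\Lambda(p,\Lambda)}\Hom_\Lambda(P_1,\Lambda)\big)$, lift them to a map $h\colon P_1\to\Lambda^{n}$, and let $E$ be the pushout of $p$ along $h$, with any projective summand it inherits from $\Lambda^{n}$ stripped off; thus there is an exact sequence $\Lambda^{n}\to E\to U\to 0$ (which is the classical universal extension when $p$ happens to be injective). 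One then verifies, using the pushout and the choice of the $h_i$: that $E\in{}^{\perp}(\tau U)$ and that a minimal projective presentation $p_E$ of $E$ satisfies $\Hom_\Lambda(p_E,U)$ surjective, so that together with the previous paragraph $U\oplus E$ is $\tau$-rigid; that the canonical map $\Lambda\to E$ realizes $\Fac(U\oplus E)={}^{\perp}(\tau U)$ as a faithful, functorially finite torsion class; and, by counting indecomposable $\Ext$-projective objects of such a torsion class, that $|U\oplus E|=|\Lambda|$. Hence the basic version of $U\oplus E$ is a $\tau$-tilting module having $U$ as a direct summand.

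The main obstacle is precisely the point where $\tau$-theory departs from the hereditary-like theory: in a minimal projective presentation of $U$ the map $p$ need not be injective, so the classical universal extension $0\to\Lambda^{n}\to E\to U^{n}\to 0$ is unavailable, and $\{N:\Ext^1_\Lambda(U,N)=0\}$ is not closed under quotients and is therefore the wrong class to work with. Replacing it by the condition ``$\Hom_\Lambda(p,-)$ surjective'', equivalently ``$N\in{}^{\perp}(\tau U)$'', restores the closure properties, but the real work then lives in the pushout construction: one must keep track of the projective summands of $\Lambda^{n}$ that survive in the pushout, verify the several surjectivity conditions that encode $\tau$-rigidity of $U\oplus E$, and --- the fussiest point --- show that $\Fac(U\oplus E)$ is faithful and that the number of its indecomposable $\Ext$-projective objects is exactly $|\Lambda|$, which is what upgrades the support $\tau$-tilting module one extracts to an honest $\tau$-tilting module.
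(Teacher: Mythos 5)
First, a remark on the comparison itself: the paper offers no proof of Theorem \ref{2.2} --- it is quoted verbatim from [AIR, Theorem 2.10] --- so your proposal can only be measured against the standard argument in that reference. Your overall architecture is the correct one: pass to the torsion class $\mathcal{T}={}^{\perp}(\tau U)$, characterized by surjectivity of $\Hom_\Lambda(p,-)$ as in Lemma \ref{2.3}; note that it contains $\Fac U$ and that $U$ is Ext-projective in it; and aim to show $\mathcal{T}$ is a sincere functorially finite torsion class, so that the sum of its indecomposable Ext-projective objects is the desired $\tau$-tilting module. (Your final ``counting'' step silently invokes the AIR bijection between functorially finite torsion classes and support $\tau$-tilting pairs, which is itself a main theorem of that paper --- acceptable for a completion argument, but it should be named.) The genuine gap is in the one step you make explicit, the construction of $E$. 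Your pushout admits the projective presentation $P_1\xrightarrow{\,(p,-h)\,}P_0\oplus\Lambda^{n}\to E\to 0$, and because the $h_i$ generate $\Coker\Hom_\Lambda(p,\Lambda)$ as a left module, the induced map $\Hom_\Lambda(P_0\oplus\Lambda^{n},N)\to\Hom_\Lambda(P_1,N)$ is surjective for \emph{every} $N$ (its cokernel is $N\otimes_\Lambda\Coker\Hom_\Lambda(p,\Lambda)$, spanned by the images of the $h_i$ precomposed with $\Lambda\to N$). By Lemma \ref{2.3} this forces $\Hom_\Lambda(N,\tau E)=0$ for all $N$, i.e.\ $E$ is always projective; and nothing in the construction arranges the condition you actually need, namely $\Hom_\Lambda(E,\tau U)=0$.

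Concretely, let $\Lambda=kQ$ with $Q\colon 1\to 2\to 3$ and let $U$ be the indecomposable module with dimension vector $(1,1,0)$, so that $\tau U=P_2$ and $U$ is $\tau$-rigid. The minimal projective presentation is $P_3\xrightarrow{p}P_1\to U\to 0$, the cokernel of $\Hom_\Lambda(p,\Lambda)$ is cyclic, $n=1$, $h$ is the inclusion $P_3\hookrightarrow\Lambda$, and the pushout is $E\cong P_1^{2}\oplus P_2$. Then $\Hom_\Lambda(E,\tau U)\supseteq\Hom_\Lambda(P_2,P_2)\neq 0$, so $U\oplus E$ is not even $\tau$-rigid; stripping the summands inherited from $\Lambda$ leaves $U\oplus P_1$, which is $\tau$-rigid but has only two indecomposable summands. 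The true Bongartz completion here is $P_1\oplus U\oplus S_2$, and the non-projective summand $S_2$ cannot be produced by your construction at all. The source of the error is a reversal of Bongartz's sequence: the classical universal extension is $0\to\Lambda\to E\to U^{d}\to 0$ (one copy of $\Lambda$ below, $d$ copies of $U$ above, $d$ the number of generators of $\Ext^1_\Lambda(U,\Lambda)$ over $\End_\Lambda(U)$), built precisely to kill $\Ext^1_\Lambda(U,E)$, whose $\tau$-theoretic analogue is $\Hom_\Lambda(E,\tau U)$, i.e.\ $E\in{}^{\perp}(\tau U)$; your sequence $\Lambda^{n}\to E\to U\to 0$ instead kills $\Hom_\Lambda(-,\tau E)$, which is the wrong condition and trivializes $E$. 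The construction must be redone (following [AIR]) before the remaining verifications --- $\tau$-rigidity of $U\oplus E$ including $\Hom_\Lambda(E,\tau E)=0$, the identity $\Fac(U\oplus E)={}^{\perp}(\tau U)$, and sincerity --- can even be attempted.
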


\begin{lemma}\label{2.3} {\rm (\cite[Proposition 2.4]{AIR})}
Let $X\in\mod\Lambda$ and
$$P_1 \stackrel{f_0}{\longrightarrow}P_0 {\longrightarrow} X{\longrightarrow} 0$$ be a projective presentation
of $X$ in $\mod\Lambda$. For any $Y\in\mod \Lambda$, if $\Hom_\Lambda(f_0,Y)$ is epic, then $\Hom_\Lambda(Y,\tau X)=0$.
Moreover, the converse holds if the projective presentation is minimal.
\end{lemma}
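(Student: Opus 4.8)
The plan is to compute $\Hom_\Lambda(Y,\tau X)$ explicitly through the Nakayama functor $\nu:=D\Hom_\Lambda(-,\Lambda)$. Fix a \emph{minimal} projective presentation $P_1^{\min}\xrightarrow{f_0^{\min}}P_0^{\min}\to X\to 0$. By definition $\tau X$ is the kernel of $\nu f_0^{\min}$, so applying $\nu$ produces an exact sequence
$$0\to\tau X\to\nu P_1^{\min}\xrightarrow{\ \nu f_0^{\min}\ }\nu P_0^{\min}.$$
The first ingredient I would record is the natural isomorphism $\Hom_\Lambda(Y,\nu P)\cong D\Hom_\Lambda(P,Y)$, valid for every finitely generated projective $P$ and functorial in both $P$ and $Y$; it follows from the tensor--hom adjunction combined with the evaluation isomorphism $Y\otimes_\Lambda\Hom_\Lambda(P,\Lambda)\cong\Hom_\Lambda(P,Y)$.

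Applying $\Hom_\Lambda(Y,-)$ to the displayed sequence and transporting along this isomorphism gives an exact sequence
$$0\to\Hom_\Lambda(Y,\tau X)\to D\Hom_\Lambda(P_1^{\min},Y)\xrightarrow{\ D\Hom_\Lambda(f_0^{\min},Y)\ }D\Hom_\Lambda(P_0^{\min},Y).$$
Since $D$ is exact and contravariant, the kernel on the right is $D$ of a cokernel, so this identifies $\Hom_\Lambda(Y,\tau X)$ with $D\Coker\Hom_\Lambda(f_0^{\min},Y)$. Hence $\Hom_\Lambda(Y,\tau X)=0$ if and only if $\Hom_\Lambda(f_0^{\min},Y)\colon\Hom_\Lambda(P_0^{\min},Y)\to\Hom_\Lambda(P_1^{\min},Y)$ is epic, which settles the ``moreover'' statement for a minimal presentation.

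For an arbitrary projective presentation $P_1\xrightarrow{f_0}P_0\to X\to0$ I would compare it with the minimal one: one can choose decompositions $P_0\cong P_0^{\min}\oplus R$ and $P_1\cong P_1^{\min}\oplus R'\oplus R''$ with respect to which $f_0$ is the direct sum of $f_0^{\min}$, an isomorphism $R'\xrightarrow{\sim}R$, and the zero map $R''\to0$. Applying $\Hom_\Lambda(-,Y)$ then shows that $\Hom_\Lambda(f_0,Y)$ is epic precisely when $\Hom_\Lambda(f_0^{\min},Y)$ is epic \emph{and} $\Hom_\Lambda(R'',Y)=0$; in particular, epicness of $\Hom_\Lambda(f_0,Y)$ forces epicness of $\Hom_\Lambda(f_0^{\min},Y)$, hence $\Hom_\Lambda(Y,\tau X)=0$ by the previous paragraph, while the extra summand $R''$ is exactly the obstruction that can destroy the converse when the presentation is not minimal. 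The only delicate point is this last bit of bookkeeping --- justifying the direct-sum splitting of $f_0$ relative to a projective cover --- together with keeping the left/right module conventions straight in the adjunction isomorphism; the rest is formal.
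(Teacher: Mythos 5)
The paper does not prove this lemma at all: it is quoted verbatim from \cite[Proposition 2.4]{AIR} (going back to Auslander--Smal\o{}), and your argument via the exact sequence $0\to\tau X\to\nu P_1^{\min}\to\nu P_0^{\min}$, the natural isomorphism $\Hom_\Lambda(Y,\nu P)\cong D\Hom_\Lambda(P,Y)$, and the splitting of an arbitrary presentation off the minimal one is precisely the standard proof given in that reference. Your proof is correct, including the bookkeeping for the non-minimal case, so there is nothing to add.
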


Sometimes, it is convenient to view support $\tau$-tilting modules and $\tau$-rigid modules as certain pairs of modules in $\mod \Lambda$.

\begin{definition} \label{2.4} {\rm (\cite[Definition 0.3]{AIR})
Let $(M,P)$ be a pair in $\mod\Lambda$ with $P$ projective.
\begin{enumerate}
\item[(1)] The pair $(M, P)$ is called a {\it $\tau$-rigid pair} if $M$ is $\tau$-rigid and $\Hom_\Lambda(P,M)=0$.
\item[(2)] The pair $(M, P)$ is called a {\it support $\tau$-tilting pair} (respectively, {\it almost complete $\tau$-tilting pair})
if it is $\tau$-rigid and $|M|+|P|=|\Lambda|$ (respectively, $|M|+|P|=|\Lambda|-1$).
\end{enumerate}}
\end{definition}

Note that $(M,P)$ is a support $\tau$-tilting pair if and only if $M$ is a $\tau$-tilting $\Lambda/<e>$-module,
where $e\Lambda\cong P$. Hence, $M$ is a $\tau$-tilting $\Lambda$-module if and only if $(M,0)$ is a support $\tau$-tilting pair.

Let $(U,Q)$ be an almost complete $\tau$-tilting pair and $X\in\mod\Lambda$ indecomposable. We say that
$(X,0)$ (respectively, $(0,X)$) is a {\it complement} of $(U,Q)$ if $(U\oplus X,Q)$ (respectively, $(U,Q\oplus X)$)
is support $\tau$-tilting. It follows from \cite[Theorem 2.18]{AIR} that
any basic almost complete $\tau$-tilting pair in $\mod \Lambda$ has exactly two complements.
Two support $\tau$-tilting pairs $(T,P)$ and $(\widetilde{T},\widetilde{P})$ in $\mod\Lambda$
are called {\it mutations}  of each other if they have the same direct summand $(U,Q)$
which is an almost complete $\tau$-tilting pair. In this case, we write
$(\widetilde{T}, \widetilde{P})=\mu_{X}(T, P)$ (simply $\widetilde{T}=$$\mu_{X}T$)
if the indecomposable module $X$ satisfies either $T=U \oplus X$ or $P=Q\oplus X$.

\begin{definition}\label{2.5} {\rm (\cite[Definition 2.28]{AIR})
Let $T=U\oplus X$ and $\widetilde{T}$ be two support $\tau$-tilting $\Lambda$-modules such that
$\widetilde{T}=\mu_{X}T$ with $X$ indecomposable. Then $\widetilde{T}$ is called a {\it left mutation}
(respectively, {\it right mutation}) of $T$, denoted by $\widetilde{T}=\mu^-_{ X}T$
(respectively, $\widetilde{T}=\mu^+_{ X}T$), if $X\notin \Fac U$ (respectively, $X\in \Fac U$).}
\end{definition}

\begin{definition}\label{2.6} {\rm (\cite[Definition 2.29]{AIR})
The {\it support $\tau$-tilting quiver} $Q(s\tau$-$\tilt \Lambda)$ of $\Lambda$ is defined as follows.
\begin{enumerate}
\item[(1)] The set of vertices consists of the isomorphisms classes of basic support $\tau$-tilting $\Lambda$-modules.
\item[(2)] We draw an  arrow from $T$ to its left mutation.
\end{enumerate}}
\end{definition}

\subsection{Split-by-nilpotent extensions}

Let $\Lambda$ and $\Gamma$ be two algebras.

\begin{definition}\label{2.7} {\rm (\cite[Definition 1.1]{AZ})
We say that $\Gamma$ is a {\it split extension of $\Lambda$ by the nilpotent bimodule $_\Lambda E_\Lambda$}, or simply
a {\it split-by-nilpotent extension} if there exists a split surjective algebra
morphism $\Gamma\to\Lambda$ whose kernel $E$ is contained in the radical of $\Gamma$.}
\end{definition}

Let $\Gamma$ be a split-by-nilpotent extension of $\Lambda$ by the nilpotent bimodule $_\Lambda E_\Lambda$.
Clearly, the short exact sequence of $\Lambda$-$\Lambda$-bimodules
$$0 {\longrightarrow}_\Lambda E_\Lambda {\longrightarrow}_\Lambda \Gamma_\Lambda{\longrightarrow}\Lambda{\longrightarrow}0$$
splits. Therefore, there exists an isomorphism $_\Lambda \Gamma_\Lambda\cong \Lambda\oplus _\Lambda E_\Lambda$.
The module categories  over $\Lambda$ and $\Gamma$ are related by the following functors
\begin{center}
$-\otimes _\Lambda \Gamma: \mod \Lambda\rightarrow \mod \Gamma$,  ~~~~  $-\otimes _\Gamma \Lambda: \mod \Gamma \rightarrow \mod \Lambda$,
\end{center}
\begin{center}
$\Hom_\Lambda(\Gamma_\Lambda, -): \mod \Lambda\rightarrow \mod \Gamma$,  ~~~~ $\Hom_\Gamma(\Lambda_\Gamma, -): \mod \Gamma\rightarrow \mod \Lambda$.
\end{center}
Moreover, we have $$-\otimes _\Lambda \Gamma_\Gamma\otimes _\Gamma \Lambda\cong 1_{\mod \Lambda},$$
$$\Hom_\Gamma(\Lambda_\Gamma, \Hom_\Lambda(\Gamma_\Lambda, -))\cong 1_{\mod \Lambda}.$$

\begin{lemma}\label{2.8}
Let $\Gamma$ be a split-by-nilpotent extension of $\Lambda$. Then for any $M\in\mod\Lambda$, we have
\begin{enumerate}
\item[(1)] There exists a bijective correspondence between the isomorphism classes of indecomposable summands of
$M$ in $\mod \Lambda$ and the isomorphism classes of indecomposable summands of $M_\Lambda\otimes_\Lambda \Gamma$ in $\mod \Gamma$,
given by $N_\Lambda\rightarrow N_\Lambda\otimes_\Lambda \Gamma$.
\item[(2)] $|M_\Lambda|=|M_\Lambda\otimes_\Lambda \Gamma|$.
\item[(3)] Any indecomposable projective module in $\mod \Gamma$ is the form $P\otimes_\Lambda \Gamma$,
where $P$ is indecomposable projective in $\mod \Lambda$. In particular, $|\Lambda|=|\Gamma|$.
\end{enumerate}
\end{lemma}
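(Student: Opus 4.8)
The plan is to reduce the whole statement to one assertion: the functor $F:=-\otimes_\Lambda\Gamma\colon\mod\Lambda\to\mod\Gamma$ sends indecomposables to indecomposables and reflects isomorphisms. Reflecting isomorphisms is immediate from $-\otimes_\Lambda\Gamma_\Gamma\otimes_\Gamma\Lambda\cong 1_{\mod\Lambda}$: if $N\otimes_\Lambda\Gamma\cong N'\otimes_\Lambda\Gamma$, apply $-\otimes_\Gamma\Lambda$. Granting also that $F$ preserves indecomposability, parts (1) and (2) are formal consequences via the Krull--Schmidt theorem (valid here since everything is finite-dimensional): writing $M=\bigoplus_i N_i$ with each $N_i$ indecomposable, $M\otimes_\Lambda\Gamma=\bigoplus_i(N_i\otimes_\Lambda\Gamma)$ is a decomposition into indecomposables, so every indecomposable summand of $M\otimes_\Lambda\Gamma$ is isomorphic to some $N_i\otimes_\Lambda\Gamma$, while $N_i\otimes_\Lambda\Gamma\cong N_j\otimes_\Lambda\Gamma$ forces $N_i\cong N_j$. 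This is precisely the bijection in (1), and $|M|=|M\otimes_\Lambda\Gamma|$ in (2) follows by counting.

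To prove that $F$ preserves indecomposability, I would identify $\End_\Gamma(M\otimes_\Lambda\Gamma)$ modulo a nilpotent ideal with $\End_\Lambda(M)$. Writing $X:=M\otimes_\Lambda\Gamma$, the functor $G:=-\otimes_\Gamma\Lambda$ induces a ring homomorphism $\End_\Gamma(X)\to\End_\Lambda(X\otimes_\Gamma\Lambda)\cong\End_\Lambda(M)$; it is surjective because $GF\cong 1_{\mod\Lambda}$ makes $G$ a left inverse of $F$ on endomorphism rings, and its kernel is $I(M):=\{f\in\End_\Gamma(X)\mid f(X)\subseteq XE\}$, since $X\otimes_\Gamma\Lambda = X/XE$ (here one uses $\Gamma E=E$, so that $XE=M\otimes_\Lambda E$). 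One checks directly that $I(M)$ is a two-sided ideal.

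The main point --- and the only place the hypothesis that $E$ is nilpotent is used --- is that $I(M)$ is a \emph{nilpotent} ideal. For $f_1,\dots,f_k\in I(M)$, $\Gamma$-linearity gives $f_k(X)\subseteq XE$, then $f_{k-1}f_k(X)\subseteq f_{k-1}(XE)=f_{k-1}(X)E\subseteq(XE)E=XE^2$, and inductively $f_1\cdots f_k(X)\subseteq XE^k$; as $E\subseteq\mathrm{rad}\,\Gamma$ we have $E^k=0$ for $k\gg0$, so $I(M)^k=0$. Therefore $\End_\Gamma(X)/I(M)\cong\End_\Lambda(M)$ with $I(M)$ nilpotent, so one endomorphism ring is local if and only if the other is; since a finite-dimensional module is indecomposable exactly when its endomorphism ring is local, $M\otimes_\Lambda\Gamma$ is indecomposable if and only if $M$ is.

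Finally, for (3) I would apply (1) and (2) to $\Lambda$ itself. From $\Lambda_\Lambda\otimes_\Lambda\Gamma\cong\Gamma_\Gamma$ and a decomposition $\Lambda_\Lambda=\bigoplus_i P_i$ into indecomposable projectives, we get $\Gamma_\Gamma\cong\bigoplus_i(P_i\otimes_\Lambda\Gamma)$ with each $P_i\otimes_\Lambda\Gamma$ indecomposable (by the above) and projective (being a summand of $\Gamma_\Gamma$). Every indecomposable projective $\Gamma$-module is a summand of $\Gamma_\Gamma$, hence isomorphic to some $P_i\otimes_\Lambda\Gamma$ by Krull--Schmidt, and these are pairwise non-isomorphic because $F$ reflects isomorphisms; counting them gives $|\Gamma|=|\Lambda|$.
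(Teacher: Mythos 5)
Your proposal is correct, but it is self-contained where the paper is not: the paper's entire proof of this lemma is a citation of \cite[Lemma 1.2]{AN} for part (1), with (2) and (3) declared immediate consequences. Your derivation of (2) and (3) from (1) matches the paper's (implicit) reasoning --- Krull--Schmidt counting for (2), and applying the correspondence to $\Lambda_\Lambda\otimes_\Lambda\Gamma\cong\Gamma_\Gamma$ for (3). What you add is an actual proof of (1), and the argument is sound: reflection of isomorphisms from $-\otimes_\Lambda\Gamma\otimes_\Gamma\Lambda\cong 1_{\mod\Lambda}$, and preservation of indecomposability by exhibiting $\End_\Gamma(M\otimes_\Lambda\Gamma)/I(M)\cong\End_\Lambda(M)$ with $I(M)=\{f\mid f(X)\subseteq XE\}$ nilpotent (via $f_1\cdots f_k(X)\subseteq XE^k$ and $E\subseteq\mathrm{rad}\,\Gamma$), so that locality of the endomorphism ring passes back and forth. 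This is essentially the standard proof of the Assem--Marmaridis lemma, and it correctly isolates where the nilpotency hypothesis enters, which the paper's one-line citation hides. The only cost is length; the benefit is that the reader sees why the split-by-nilpotent hypothesis (rather than an arbitrary extension) is needed.
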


\begin{proof}
The assertion (1) is \cite[Lemma 1.2]{AN}. The latter two assertions follow immediately from (1).
\end{proof}

\begin{lemma}\label{2.9} {\rm (\cite[Lemma 2.1]{AN})}
Let $\Gamma$ be a split-by-nilpotent extension of $\Lambda$. Then for any $M\in\mod\Lambda$, there exists the following isomorphism
\begin{center}
$\tau(M\otimes_\Lambda \Gamma)\cong \Hom_\Lambda(_\Gamma \Gamma_\Lambda, \tau M_\Lambda)$.
\end{center}
\end{lemma}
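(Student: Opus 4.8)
The plan is to compute $\tau(M\otimes_\Lambda\Gamma)$ via the transpose, by inducing a minimal projective presentation of $M$ up to $\Gamma$ along $-\otimes_\Lambda\Gamma$. Fix a minimal projective presentation $P_1\stackrel{p}{\to}P_0\to M\to 0$ in $\mod\Lambda$. Since $-\otimes_\Lambda\Gamma$ is right exact and sends projective $\Lambda$-modules to projective $\Gamma$-modules (indeed $e\Lambda\otimes_\Lambda\Gamma\cong e\Gamma$), applying it produces an exact sequence $P_1\otimes_\Lambda\Gamma\stackrel{p\otimes\Gamma}{\to}P_0\otimes_\Lambda\Gamma\to M\otimes_\Lambda\Gamma\to 0$ with the two leftmost terms projective over $\Gamma$. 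The whole argument rests on first checking that this induced sequence is again a \emph{minimal} projective presentation, so that $\mathrm{Tr}_\Gamma(M\otimes_\Lambda\Gamma)$ may be read off from it.

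The minimality check is the main obstacle, and it is exactly where the split-by-nilpotent hypothesis is used. Since $E\subseteq\mathrm{rad}\,\Gamma$, we have $\Gamma/\mathrm{rad}\,\Gamma\cong\Lambda/\mathrm{rad}\,\Lambda$, so $-\otimes_\Lambda\Gamma$ preserves tops, i.e.\ $\mathrm{top}(N\otimes_\Lambda\Gamma)\cong\mathrm{top}\,N$ for all $N\in\mod\Lambda$; in particular $P_0\otimes_\Lambda\Gamma\to M\otimes_\Lambda\Gamma$ is a projective cover. Writing $K=\Ker(P_0\to M)$ and tensoring $0\to K\to P_0\to M\to 0$ with $\Gamma$ gives the exact sequence $0\to\Tor_1^\Lambda(M,\Gamma)\to K\otimes_\Lambda\Gamma\to P_0\otimes_\Lambda\Gamma\to M\otimes_\Lambda\Gamma\to 0$; thus the image of $p\otimes\Gamma$ is the image of $K\otimes_\Lambda\Gamma$, and the cover $P_1\to K$ (minimality over $\Lambda$) induces a projective cover $P_1\otimes_\Lambda\Gamma\to K\otimes_\Lambda\Gamma$ by top preservation. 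It therefore suffices to see that $\Tor_1^\Lambda(M,\Gamma)$ is superfluous, i.e.\ contained in $\mathrm{rad}(K\otimes_\Lambda\Gamma)$. Using ${}_\Lambda\Gamma\cong\Lambda\oplus{}_\Lambda E$ one identifies $\Tor_1^\Lambda(M,\Gamma)\cong\Tor_1^\Lambda(M,E)$, a subspace of the image of $K\otimes_\Lambda E$ in $K\otimes_\Lambda\Gamma$; and every $w\otimes e$ with $e\in E$ equals $(w\otimes 1)e\in (K\otimes_\Lambda\Gamma)\,\mathrm{rad}\,\Gamma=\mathrm{rad}(K\otimes_\Lambda\Gamma)$ because $E\subseteq\mathrm{rad}\,\Gamma$. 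Hence the induced sequence is minimal.

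Granting minimality, I would apply $\Hom_\Gamma(-,\Gamma)$ to obtain $\mathrm{Tr}_\Gamma(M\otimes_\Lambda\Gamma)=\Coker\bigl(\Hom_\Gamma(P_0\otimes_\Lambda\Gamma,\Gamma)\to\Hom_\Gamma(P_1\otimes_\Lambda\Gamma,\Gamma)\bigr)$. For projective $P$ the induction--restriction adjunction together with the evaluation isomorphism for finitely generated projectives gives natural isomorphisms of left $\Gamma$-modules $\Hom_\Gamma(P\otimes_\Lambda\Gamma,\Gamma)\cong\Hom_\Lambda(P,{}_\Gamma\Gamma_\Lambda)\cong{}_\Gamma\Gamma\otimes_\Lambda\Hom_\Lambda(P,\Lambda)$. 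Since ${}_\Gamma\Gamma\otimes_\Lambda-$ is right exact it commutes with the cokernel, so $\mathrm{Tr}_\Gamma(M\otimes_\Lambda\Gamma)\cong{}_\Gamma\Gamma\otimes_\Lambda\mathrm{Tr}_\Lambda M$. Finally, for any left $\Lambda$-module $Z$ the hom-tensor duality yields a natural isomorphism $D({}_\Gamma\Gamma\otimes_\Lambda Z)\cong\Hom_\Lambda({}_\Gamma\Gamma_\Lambda,DZ)$ compatible with the $\Gamma$-actions; taking $Z=\mathrm{Tr}_\Lambda M$ and using $\tau=D\,\mathrm{Tr}$ on both sides gives $\tau(M\otimes_\Lambda\Gamma)\cong\Hom_\Lambda({}_\Gamma\Gamma_\Lambda,D\,\mathrm{Tr}_\Lambda M)=\Hom_\Lambda({}_\Gamma\Gamma_\Lambda,\tau M_\Lambda)$, which is the desired isomorphism. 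Throughout, the only nonformal input is the minimality of the induced presentation, so that is where I would concentrate the care.
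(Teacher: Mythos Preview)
The paper does not prove this lemma; it is quoted verbatim from \cite[Lemma 2.1]{AN} without argument, so there is no in-paper proof to compare against. Your proof is correct and is the natural one: induce a minimal projective presentation along $-\otimes_\Lambda\Gamma$, compute the transpose as ${}_\Gamma\Gamma\otimes_\Lambda\mathrm{Tr}_\Lambda M$ via the adjunction and the evaluation isomorphism for finitely generated projectives, and then apply $D$ with the tensor--Hom duality.

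The one genuinely nontrivial step---minimality of the induced presentation---is handled correctly. Your identification of $\Tor_1^\Lambda(M,\Gamma)$ with the $K\otimes_\Lambda E$ part of $K\otimes_\Lambda\Gamma$ uses the bimodule splitting $_\Lambda\Gamma_\Lambda\cong\Lambda\oplus{}_\Lambda E_\Lambda$ (so that the map $K\to P_0$ is injective on the $\Lambda$-summand and the kernel sits entirely in the $E$-summand), and then the observation $w\otimes e=(w\otimes 1)\cdot e\in(K\otimes_\Lambda\Gamma)\,\mathrm{rad}\,\Gamma$ uses precisely the hypothesis $E\subseteq\mathrm{rad}\,\Gamma$. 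One small cosmetic point: rather than speaking of ``the image of $K\otimes_\Lambda E$ in $K\otimes_\Lambda\Gamma$'', it is cleaner to say that the inclusion $E\hookrightarrow\Gamma$ is $\Lambda$-split, so $K\otimes_\Lambda E$ embeds as a direct summand (over $\Lambda$) of $K\otimes_\Lambda\Gamma$; this makes the containment $\Tor_1^\Lambda(M,\Gamma)\subseteq K\otimes_\Lambda E\subseteq\mathrm{rad}(K\otimes_\Lambda\Gamma)$ unambiguous.
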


\section{Main results}

In this section, assume that $\Gamma$ is a split extension of $\Lambda$ by the nilpotent bimodule $_\Lambda E_\Lambda$.

\subsection{$\tau$-tilting and $\tau$-rigid modules}

The following result is a $\tau$-version of \cite[Theorem A]{AN}.

\begin{theorem}\label{3.1}
Let $(T,P)$ be a pair in $\mod \Lambda$ with $P$ projective. Then the following statements are equivalent.
\begin{enumerate}
\item[(1)] $(T\otimes_\Lambda \Gamma_\Gamma,P\otimes_\Lambda \Gamma_\Gamma)$ is a support $\tau$-tilting pair in $\mod \Gamma$.
\item[(2)] $(T,P)$ is a support $\tau$-tilting pair in $\mod \Lambda$ and
$$\Hom_\Lambda(T\otimes_\Lambda E,\tau T_\Lambda)=0=\Hom_\Lambda(P,T\otimes_\Lambda E).$$
\end{enumerate}
\end{theorem}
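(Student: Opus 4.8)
The plan is to prove the equivalence by unwinding the definition of a support $\tau$-tilting pair into its two constituent conditions---$\tau$-rigidity of the module together with $\Hom$-vanishing against the projective part, and the numerical condition $|{-}|+|{-}|=|{-}|$---and to verify each one separately, transferring between $\mod\Lambda$ and $\mod\Gamma$ via the functor $-\otimes_\Lambda\Gamma$. First I would dispose of the numerical condition: by Lemma \ref{2.8}(2) we have $|T\otimes_\Lambda\Gamma|=|T|$ and $|P\otimes_\Lambda\Gamma|=|P|$, and by Lemma \ref{2.8}(3) we have $|\Gamma|=|\Lambda|$, so $|T\otimes_\Lambda\Gamma|+|P\otimes_\Lambda\Gamma|=|\Gamma|$ holds if and only if $|T|+|P|=|\Lambda|$. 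Also $P\otimes_\Lambda\Gamma$ is projective in $\mod\Gamma$ by Lemma \ref{2.8}(3), and conversely, so it suffices to match up the $\tau$-rigid pair conditions.

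The core of the argument is therefore to show that $(T\otimes_\Lambda\Gamma,P\otimes_\Lambda\Gamma)$ is a $\tau$-rigid pair in $\mod\Gamma$ if and only if $(T,P)$ is a $\tau$-rigid pair in $\mod\Lambda$ and the two extra $\Hom$-vanishing conditions hold. For the rigidity part, I would compute $\Hom_\Gamma(T\otimes_\Lambda\Gamma,\tau(T\otimes_\Lambda\Gamma))$ using Lemma \ref{2.9}, which identifies $\tau(T\otimes_\Lambda\Gamma)$ with $\Hom_\Lambda({}_\Gamma\Gamma_\Lambda,\tau T_\Lambda)$; combined with the adjunction $\Hom_\Gamma(M\otimes_\Lambda\Gamma,\Hom_\Lambda(\Gamma_\Lambda,N))\cong\Hom_\Lambda(M,N)$ coming from tensor-hom adjunction (the functors recalled in Section 2.2), this should give
\[
\Hom_\Gamma(T\otimes_\Lambda\Gamma,\tau(T\otimes_\Lambda\Gamma))\cong\Hom_\Lambda(T,\tau T_\Lambda).
\]
Wait---one must be careful here, because $\Gamma$ as a left $\Lambda$-module is $\Lambda\oplus E$, so this adjunction applied naively does not kill $E$. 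The right way is to use the bimodule decomposition ${}_\Lambda\Gamma_\Lambda\cong{}_\Lambda\Lambda_\Lambda\oplus{}_\Lambda E_\Lambda$ to split $\Hom_\Lambda(\Gamma_\Lambda,\tau T_\Lambda)\cong\tau T_\Lambda\oplus\Hom_\Lambda(E,\tau T_\Lambda)$ as $\Lambda$-modules, and then observe that applying $\Hom_\Gamma(T\otimes_\Lambda\Gamma,-)$ and using $-\otimes_\Lambda\Gamma\dashv-\otimes_\Gamma\Lambda$ together with $(T\otimes_\Lambda\Gamma)\otimes_\Gamma\Lambda\cong T$ reduces everything to $\Hom_\Lambda$-groups over $\Lambda$. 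The upshot should be an identification of $\Hom_\Gamma(T\otimes_\Lambda\Gamma,\tau(T\otimes_\Lambda\Gamma))$ with a sum of $\Hom_\Lambda(T,\tau T)$ and $\Hom_\Lambda(T\otimes_\Lambda E,\tau T)$ (the latter because $T\otimes_\Gamma(\Gamma\otimes_\Lambda E)$, or the appropriate restriction, recovers $T\otimes_\Lambda E$), so that vanishing of the $\Gamma$-side $\Hom$ is equivalent to vanishing of both $\Lambda$-side pieces. Similarly, $\Hom_\Gamma(P\otimes_\Lambda\Gamma,T\otimes_\Lambda\Gamma)$ should decompose, via $\Gamma=\Lambda\oplus E$ on the appropriate side, into $\Hom_\Lambda(P,T)\oplus\Hom_\Lambda(P,T\otimes_\Lambda E)$, giving the condition $\Hom_\Lambda(P,T)=0=\Hom_\Lambda(P,T\otimes_\Lambda E)$.

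I expect the main obstacle to be exactly the bookkeeping in the previous paragraph: correctly tracking which copy of $\Gamma$ is being used as a left versus right module, and verifying the natural isomorphisms $(T\otimes_\Lambda\Gamma)\otimes_\Gamma\Gamma_\Lambda$-type identities and the compatibility of the Auslander--Reiten translate formula of Lemma \ref{2.9} with these adjunctions, so that the cross-terms land precisely as $\Hom_\Lambda(T\otimes_\Lambda E,\tau T_\Lambda)$ and $\Hom_\Lambda(P,T\otimes_\Lambda E)$ rather than as some other $E$-twisted group. A secondary point requiring care is that Lemma \ref{2.9} computes $\tau$ of a module of the form $M\otimes_\Lambda\Gamma$, and for the rigidity computation I would want to apply a minimal projective presentation argument (Lemma \ref{2.3}) as an alternative route if the direct adjunction computation becomes unwieldy: a minimal projective presentation $P_1\to P_0\to T\to0$ in $\mod\Lambda$ tensors (since $\Gamma$ is projective as a right $\Lambda$-module? no---rather since $-\otimes_\Lambda\Gamma$ sends projectives to projectives and is right exact) to a projective presentation $P_1\otimes_\Lambda\Gamma\to P_0\otimes_\Lambda\Gamma\to T\otimes_\Lambda\Gamma\to0$ in $\mod\Gamma$, which one checks is still minimal using Lemma \ref{2.8}(1); then $\Hom_\Gamma(f_0\otimes_\Lambda\Gamma,-)$ being epic on $T\otimes_\Lambda\Gamma$ can be tested after restriction along $\Gamma\to\Lambda$ and after the $E$-twist, reducing to the two stated conditions. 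Either route should close the proof; I would present whichever gives the cleanest identification of the error terms.
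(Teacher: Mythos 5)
Your proposal is correct and takes essentially the same route as the paper: the numerical condition is handled via Lemma \ref{2.8}(2)(3), and the rigidity conditions are matched by combining Lemma \ref{2.9} with the adjunction and the bimodule splitting ${}_\Lambda\Gamma_\Lambda\cong\Lambda\oplus E$ to obtain the two direct-sum decompositions $\Hom_\Gamma(T\otimes_\Lambda\Gamma,\tau(T\otimes_\Lambda\Gamma))\cong\Hom_\Lambda(T,\tau T)\oplus\Hom_\Lambda(T\otimes_\Lambda E,\tau T)$ and $\Hom_\Gamma(P\otimes_\Lambda\Gamma,T\otimes_\Lambda\Gamma)\cong\Hom_\Lambda(P,T)\oplus\Hom_\Lambda(P,T\otimes_\Lambda E)$. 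Your self-correction about the naive adjunction not killing $E$ is exactly the point the paper's chain of isomorphisms handles, and the alternative route via Lemma \ref{2.3} is unnecessary.
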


\begin{proof}
By Lemma \ref{2.8}(2), we have $|T|+|P|=|T\otimes_\Lambda \Gamma|+|P\otimes_\Lambda \Gamma|$.
Hence, $|T|+|P|=|\Lambda|$ if and only if $|T\otimes_\Lambda \Gamma|+|P\otimes_\Lambda \Gamma|=|\Gamma|$
Lemma \ref{2.8}(3).

Let $T,P\in\mod \Lambda$. Then there are the following two isomorphisms
\begin{equation*}
\begin{split}
\Hom_\Gamma(T\otimes_\Lambda \Gamma, \tau(T\otimes_\Lambda \Gamma))
&\cong \Hom_\Gamma(T\otimes_\Lambda \Gamma, \Hom_\Lambda(_\Gamma \Gamma_\Lambda, \tau T_\Lambda))\ (\makebox{by Lemma \ref{2.9}})\\
&\cong  \Hom_\Lambda(T\otimes_\Lambda \Gamma\otimes _\Gamma \Gamma_\Lambda, \tau T_\Lambda)\ (\makebox{by the adjunction isomorphism})\\
&\cong  \Hom_\Lambda(T\otimes_\Lambda \Gamma_\Lambda, \tau T_\Lambda)\\
&\cong  \Hom_\Lambda(T\otimes_\Lambda(\Lambda\oplus E)_\Lambda, \tau T_\Lambda)\\
&\cong  \Hom_\Lambda(T, \tau T_\Lambda)\oplus \Hom_\Lambda(T\otimes_\Lambda E,\tau T_\Lambda),
\end{split}
\end{equation*}
and
\begin{equation*}
\begin{split}
\Hom_\Gamma(P\otimes_\Lambda \Gamma, T\otimes_\Lambda \Gamma)
&\cong \Hom_\Lambda(P_\Lambda, \Hom_\Gamma(_\Lambda \Gamma_\Gamma, T\otimes_\Lambda \Gamma))\ (\makebox{by the adjunction isomorphism})\\
&\cong  \Hom_\Lambda(P_\Lambda, T\otimes_\Lambda \Gamma_\Lambda)\\
&\cong  \Hom_\Lambda(P_\Lambda, T\otimes_\Lambda(\Lambda\oplus E)_\Lambda)\\
&\cong  \Hom_\Lambda(P_\Lambda,  T_\Lambda)\oplus \Hom_\Lambda(P_\Lambda, T\otimes_\Lambda E).
\end{split}
\end{equation*}
\end{proof}

Note that $T$ is a $\tau$-tilting $\Lambda$-module if and only if $(T,0)$ is a support $\tau$-tilting pair in $\mod \Lambda$.
The following corollary is an immediate consequence of Theorem \ref{3.1}.

\begin{corollary}\label{3.2}
For a module $T\in\mod\Lambda$, the following statements are equivalent.
\begin{enumerate}
\item[(1)] $T\otimes_\Lambda \Gamma_\Gamma$ is $\tau$-tilting in $\mod \Gamma$.
\item[(2)] $T$ is $\tau$-tilting in $\mod \Lambda$ and $\Hom_\Lambda(T\otimes_\Lambda E,\tau T_\Lambda)=0$.
\end{enumerate}
\end{corollary}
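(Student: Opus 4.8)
The plan is to prove Corollary \ref{3.2} as a direct specialization of Theorem \ref{3.1}, so the bulk of the work will simply be recording how the general statement collapses when $P = 0$. First I would invoke the remark immediately preceding the corollary: $T$ is $\tau$-tilting in $\mod\Lambda$ if and only if $(T,0)$ is a support $\tau$-tilting pair in $\mod\Lambda$, and similarly $T\otimes_\Lambda\Gamma_\Gamma$ is $\tau$-tilting in $\mod\Gamma$ if and only if $(T\otimes_\Lambda\Gamma_\Gamma, 0)$ is a support $\tau$-tilting pair in $\mod\Gamma$. Here I also need the observation that $0\otimes_\Lambda\Gamma_\Gamma = 0$, so the second coordinate of the pair $(T\otimes_\Lambda\Gamma_\Gamma, P\otimes_\Lambda\Gamma_\Gamma)$ really is the zero module when $P=0$.

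Next I would apply Theorem \ref{3.1} to the pair $(T,0)$, with $P=0$ chosen as the (trivially) projective module. The theorem then says that $(T\otimes_\Lambda\Gamma_\Gamma, 0)$ is a support $\tau$-tilting pair in $\mod\Gamma$ if and only if $(T,0)$ is a support $\tau$-tilting pair in $\mod\Lambda$ together with the two vanishing conditions $\Hom_\Lambda(T\otimes_\Lambda E, \tau T_\Lambda) = 0$ and $\Hom_\Lambda(0, T\otimes_\Lambda E) = 0$. The second of these conditions is automatic since $\Hom_\Lambda(0, -) = 0$ for any module, so it may be discarded. Translating both support $\tau$-tilting pair statements back into the $\tau$-tilting language via the remark quoted above yields exactly the equivalence of (1) and (2) in the corollary.

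The only point requiring the slightest care is making sure that taking $P = 0$ is legitimate in the hypothesis of Theorem \ref{3.1}, i.e.\ that the zero module counts as projective; this is standard, and $|0| = 0$ so the cardinality bookkeeping in Definition \ref{2.4} is unaffected. There is no genuine obstacle here: the corollary is a formal consequence, and the proof amounts to the two observations that $\Hom_\Lambda(0, T\otimes_\Lambda E) = 0$ vacuously and that $\tau$-tilting modules correspond to support $\tau$-tilting pairs with zero projective part. I would write the argument in two or three sentences and not belabor it further.
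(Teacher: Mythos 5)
Your proposal is correct and matches the paper exactly: the paper states the corollary as an immediate consequence of Theorem \ref{3.1} after noting that $T$ is $\tau$-tilting if and only if $(T,0)$ is a support $\tau$-tilting pair, which is precisely your specialization to $P=0$ with the vacuous condition $\Hom_\Lambda(0,T\otimes_\Lambda E)=0$ discarded. Nothing further is needed.
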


Let $T\in\mod\Lambda$ be $\tau$-rigid. Assume that $E_\Lambda$ is generated by $T$,
that is, there exists an epimorphism
$$T^{(n)}{\longrightarrow}E_\Lambda{\longrightarrow}0$$ in $\mod\Lambda$ for some $n\geq 1$.
Applying the functor $\Hom_\Lambda(-,\tau T_\Lambda)$ to it yields a monomorphism
$$0{\longrightarrow}\Hom_\Lambda(E_\Lambda, \tau T_\Lambda){\longrightarrow}\Hom_\Lambda(T^{(n)},\tau T_\Lambda)=0.$$
So $\Hom_\Lambda(E_\Lambda, \tau T_\Lambda)=0$, and hence
$$\Hom_\Lambda(T\otimes_\Lambda E,\tau T_\Lambda)\cong \Hom_\Lambda(T_\Lambda,\Hom_\Lambda(_\Lambda E_\Lambda,\tau T_\Lambda))=0.$$
Thus by Theorem \ref{3.1} and Corollary \ref{3.2}, we have the following result.

\begin{corollary}\label{3.3}
Let $(T,P)$ be a pair in $\mod \Lambda$ with $P$ projective. If $E_{\Lambda}$ is generated by $T$,
then the following statements are equivalent.
\begin{enumerate}
\item[(1)] $(T\otimes_\Lambda \Gamma _R,P\otimes_\Lambda \Gamma_\Gamma)$ is a support $\tau$-tilting pair in $\mod \Gamma$.
\item[(2)] $(T,P)$ is a support $\tau$-tilting pair in $\mod \Lambda$ and $\Hom_\Lambda(P,T\otimes_\Lambda E)=0$.
\end{enumerate}
Moreover, $T\otimes_\Lambda \Gamma_\Gamma$ is $\tau$-tilting in $\mod \Gamma$  if and only if $T$ is  $\tau$-tilting in $\mod \Lambda$.
\end{corollary}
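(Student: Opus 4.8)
The plan is to deduce Corollary \ref{3.3} from the two results immediately preceding it, namely Theorem \ref{3.1} and Corollary \ref{3.2}, by showing that the hypothesis ``$E_\Lambda$ is generated by $T$'' forces the condition $\Hom_\Lambda(T\otimes_\Lambda E,\tau T_\Lambda)=0$ to hold automatically. The argument in the paragraph just above the corollary already does exactly this, so the proof is essentially an assembly of that computation with the cited statements.

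First I would record the key reduction: if $(T,P)$ is a $\tau$-rigid pair (in particular $T$ is $\tau$-rigid) and $E_\Lambda$ is generated by $T$, then there is an epimorphism $T^{(n)}\to E_\Lambda\to 0$; applying $\Hom_\Lambda(-,\tau T_\Lambda)$ gives a monomorphism $0\to\Hom_\Lambda(E_\Lambda,\tau T_\Lambda)\to\Hom_\Lambda(T^{(n)},\tau T_\Lambda)\cong\Hom_\Lambda(T,\tau T_\Lambda)^{(n)}=0$, since $\Hom_\Lambda(T,\tau T)=0$. Hence $\Hom_\Lambda(E_\Lambda,\tau T_\Lambda)=0$, and then by tensor-hom adjunction
\[
\Hom_\Lambda(T\otimes_\Lambda E,\tau T_\Lambda)\cong\Hom_\Lambda(T_\Lambda,\Hom_\Lambda({}_\Lambda E_\Lambda,\tau T_\Lambda))=\Hom_\Lambda(T,0)=0.
\]

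With this in hand, the equivalence of (1) and (2) is immediate from Theorem \ref{3.1}: condition (2) of Theorem \ref{3.1} asks for $(T,P)$ to be a support $\tau$-tilting pair in $\mod\Lambda$ together with the two vanishing conditions $\Hom_\Lambda(T\otimes_\Lambda E,\tau T_\Lambda)=0$ and $\Hom_\Lambda(P,T\otimes_\Lambda E)=0$; under the generation hypothesis the first of these is automatic, so condition (2) of Theorem \ref{3.1} collapses to exactly condition (2) of the corollary. For the ``moreover'' part, I would specialize to $P=0$ and invoke Corollary \ref{3.2}: its condition (2) is ``$T$ is $\tau$-tilting in $\mod\Lambda$ and $\Hom_\Lambda(T\otimes_\Lambda E,\tau T_\Lambda)=0$'', and again the vanishing is free once $E_\Lambda$ is generated by $T$ (note that a $\tau$-tilting module is in particular $\tau$-rigid, so the reduction applies), leaving just ``$T$ is $\tau$-tilting in $\mod\Lambda$''.

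There is essentially no obstacle here; the only point requiring a word of care is making sure the generation argument only uses that $T$ is $\tau$-rigid — which is part of being a support $\tau$-tilting pair or a $\tau$-tilting module — so that the reduction is legitimately available in both directions of the stated equivalence and in the ``moreover'' clause. The rest is a direct citation of Theorem \ref{3.1} and Corollary \ref{3.2}.
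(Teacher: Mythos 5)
Your proposal is correct and matches the paper's own argument: the paper proves exactly the same reduction in the paragraph preceding the corollary (the epimorphism $T^{(n)}\to E_\Lambda$, the induced vanishing of $\Hom_\Lambda(E_\Lambda,\tau T_\Lambda)$, and the adjunction giving $\Hom_\Lambda(T\otimes_\Lambda E,\tau T_\Lambda)=0$), and then cites Theorem \ref{3.1} and Corollary \ref{3.2} just as you do. No further comment is needed.
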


Let $A$ be a hereditary algebra and $\mathcal{D}^{b}(\mod A)$ the bounded derived category of
$\mod A$. The {\it cluster category} $\mathcal{C}_A$ is defined by the orbit category of
$\mathcal{D}^{b}(\mod A)$ under the action of the functor $\tau^{-1}[1]$, where $[1]$ is the shift functor; and
a {\it tilting object} $\widetilde{T}$ in $\mathcal{C}_A$ is an object such that
$\Ext^1_{\mathcal{C}_A}(\widetilde{T},\widetilde{T})=0$ and $|\widetilde{T}|=|A|$ (\cite{B}).
The endomorphism algebra of $\widetilde{T}$ is called {\it cluster-tilted} (\cite{BM}).
It was shown in \cite[Theorem 3.4]{ABR} that, if $\Lambda$ is a tilted algebra, then the relation extension
of $\Lambda$ by $\Ext^2_\Lambda(D\Lambda, \Lambda)$ is cluster-tilted. Moreover, all cluster-tilted algebras
are of this form. In this case, we say $\Gamma$  is a cluster-tilted algebra corresponding to the tilted algebra $\Lambda$.

\begin{proposition} \label{3.4}
Let $\Gamma$ be a cluster-tilted algebra corresponding to the tilted algebra $\Lambda$
and $(T,P)$ a pair in $\mod \Lambda$ with $P$ projective. Then the following statements are equivalent.
\begin{enumerate}
\item[(1)] $(T\otimes_\Lambda \Gamma_\Gamma, P\otimes_\Lambda \Gamma_\Gamma)$ is a support $\tau$-tilting pair in $\mod \Gamma$.
\item[(2)] $(T,P)$ is a support $\tau$-tilting pair in $\mod \Lambda$ and
$$\Hom_\Lambda(\tau^{-1}\Omega^{-1}T_\Lambda,\tau T_\Lambda)=0=\Hom_\Lambda(P,\tau^{-1}\Omega^{-1}T_\Lambda).$$
\end{enumerate}
\end{proposition}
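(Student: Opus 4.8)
The plan is to derive Proposition~\ref{3.4} directly from Theorem~\ref{3.1}, once the bimodule $E$ and the action of $-\otimes_\Lambda E$ on $\mod\Lambda$ have been identified. Recall first that if $\Gamma$ is the cluster-tilted algebra corresponding to the tilted algebra $\Lambda$, then by \cite[Theorem 3.4]{ABR} $\Gamma$ is (isomorphic to) the relation extension of $\Lambda$ by $E=\Ext^2_\Lambda(D\Lambda,\Lambda)$; since this is a trivial extension one has $E\cdot E=0$ in $\Gamma$, so $E$ lies in the radical of $\Gamma$ and is nilpotent, i.e.\ $\Gamma$ is a split extension of $\Lambda$ by ${}_\Lambda E_\Lambda$. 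Thus Theorem~\ref{3.1} applies, and on comparing its condition (2) with condition (2) of the present statement it is enough to establish an isomorphism $T\otimes_\Lambda E\cong\tau^{-1}\Omega^{-1}T$ in $\mod\Lambda$: this turns $\Hom_\Lambda(T\otimes_\Lambda E,\tau T_\Lambda)$ into $\Hom_\Lambda(\tau^{-1}\Omega^{-1}T_\Lambda,\tau T_\Lambda)$ and $\Hom_\Lambda(P,T\otimes_\Lambda E)$ into $\Hom_\Lambda(P,\tau^{-1}\Omega^{-1}T_\Lambda)$.

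To prove this isomorphism I would use the classical fact that a tilted algebra satisfies $\mathrm{gl.dim}\,\Lambda\le 2$, so that $\pd_\Lambda D\Lambda\le 2$ and $\id_\Lambda T\le 2$. \emph{Step 1: $T\otimes_\Lambda\Ext^2_\Lambda(D\Lambda,\Lambda)\cong\Ext^2_\Lambda(D\Lambda,T)$.} Pick a projective resolution $0\to P_2\to P_1\to P_0\to D\Lambda\to 0$ in $\mod\Lambda$; applying $\Hom_\Lambda(-,\Lambda)$ gives an exact sequence $\Hom_\Lambda(P_1,\Lambda)\to\Hom_\Lambda(P_2,\Lambda)\to E\to 0$, and tensoring with $T$ over $\Lambda$ (right exactness) yields $T\otimes_\Lambda E\cong\Coker\!\big(T\otimes_\Lambda\Hom_\Lambda(P_1,\Lambda)\to T\otimes_\Lambda\Hom_\Lambda(P_2,\Lambda)\big)$. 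Since each $P_i$ is finitely generated projective, the evaluation maps $T\otimes_\Lambda\Hom_\Lambda(P_i,\Lambda)\to\Hom_\Lambda(P_i,T)$ are isomorphisms, natural in $P_i$; hence $T\otimes_\Lambda E\cong\Coker\!\big(\Hom_\Lambda(P_1,T)\to\Hom_\Lambda(P_2,T)\big)=\Ext^2_\Lambda(D\Lambda,T)$. \emph{Step 2: $\Ext^2_\Lambda(D\Lambda,T)\cong\tau^{-1}\Omega^{-1}T$.} Fix a minimal injective resolution $0\to T\to I^0\xrightarrow{d^0}I^1\xrightarrow{d^1}I^2\to 0$ and write $\nu^{-1}=\Hom_\Lambda(D\Lambda,-)$ for the Nakayama functor, which sends injectives to projectives. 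Computing $\Ext^\bullet_\Lambda(D\Lambda,T)$ from this resolution gives $\Ext^2_\Lambda(D\Lambda,T)=\Coker(\nu^{-1}d^1)$. On the other hand $0\to\Omega^{-1}T\to I^1\xrightarrow{d^1}I^2\to 0$ is the minimal injective copresentation of $\Omega^{-1}T$ (minimality of the resolution of $T$ makes $I^1$ the injective envelope of $\Omega^{-1}T$), so the standard formula $\tau^{-1}M=\Coker\!\big(\nu^{-1}(I^0_M)\to\nu^{-1}(I^1_M)\big)$ gives $\tau^{-1}\Omega^{-1}T=\Coker(\nu^{-1}d^1)$ as well; the degenerate cases $\id_\Lambda T\le 1$ and $T$ injective give $0$ on both sides. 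Combining Steps 1 and 2 with Theorem~\ref{3.1} finishes the proof.

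The only place where care is needed is Step 1, where one must keep the one-sided module structures under control: the $\Ext$-bimodule $E$ is contravariant in its first slot, so the left $\Lambda$-module structure that $-\otimes_\Lambda E$ sees is the one induced from the coefficient module $\Lambda$ in the second slot, and the evaluation isomorphisms $T\otimes_\Lambda\Hom_\Lambda(P_i,\Lambda)\cong\Hom_\Lambda(P_i,T)$ must be checked to commute with the map induced by $P_2\to P_1$ (automatic once naturality is observed). Step 2 is routine homological algebra, the only subtlety being that one works with a \emph{minimal} injective resolution, which is exactly what makes the induced copresentation of $\Omega^{-1}T$ its minimal one; everything else is immediate from Theorem~\ref{3.1}.
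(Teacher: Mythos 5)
Your proposal is correct and follows the paper's proof exactly: reduce to Theorem \ref{3.1} by identifying $T\otimes_\Lambda E$ with $\tau^{-1}\Omega^{-1}T$ for $E=\Ext^2_\Lambda(D\Lambda,\Lambda)$, using that a tilted algebra has global dimension at most $2$. The only difference is that the paper obtains the isomorphism $T\otimes_\Lambda \Ext^2_\Lambda(D\Lambda,\Lambda)\cong \tau^{-1}\Omega^{-1}T$ by citing \cite[Proposition 4.1]{SR}, whereas you prove it directly (correctly) via the evaluation isomorphism and the inverse Nakayama functor.
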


\begin{proof}
Since the global dimension of the tilted algebra $\Lambda$ is at most $2$, we have
$$T\otimes_\Lambda \Ext^2_\Lambda(D\Lambda, \Lambda)\cong \tau^{-1}\Omega^{-1}T$$
by \cite[Proposition 4.1]{SR}.  Now the assertion follows from Theorem \ref{3.1}.
\end{proof}

If $\id_{\Lambda}T\leq 1$, then $\tau^{-1}\Omega^{-1}T=0$. So by Proposition \ref{3.4}, we have the following corollary.

\begin{corollary}\label{3.5}
Let $\Gamma$ be a cluster-tilted algebra corresponding to the tilted algebra $\Lambda$
and $(T,P)$ a pair in $\mod \Lambda$ with $\id_{\Lambda}T\leq 1$ and $P$ projective. Then the following statements are equivalent.
\begin{enumerate}
\item[(1)] $(T\otimes_\Lambda \Gamma_\Gamma, P\otimes_\Lambda \Gamma_\Gamma)$ is a support $\tau$-tilting pair in $\mod \Gamma$.
\item[(2)] $(T,P)$ is a support $\tau$-tilting pair in $\mod \Lambda$.
\end{enumerate}
In particular, $T\otimes_\Lambda \Gamma_\Gamma$ is a $\tau$-tilting $\Gamma$-module if and only if $T$ is a $\tau$-tilting $\Lambda$-module.
\end{corollary}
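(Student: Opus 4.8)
The plan is to deduce Corollary \ref{3.5} from Proposition \ref{3.4} by checking that the two Hom-vanishing conditions in part (2) of that proposition become vacuous once $\id_{\Lambda}T\leq 1$. The whole argument rests on the single claim, already announced just before the statement, that $\tau^{-1}\Omega^{-1}T=0$ under this hypothesis; everything else is then formal.

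First I would establish $\tau^{-1}\Omega^{-1}T=0$. Choose an injective envelope $T\hookrightarrow I$ and form the short exact sequence $0\to T\to I\to \Omega^{-1}T\to 0$, so that $\Omega^{-1}T$ is the first cosyzygy of $T$ in the sense of Section 2. For any $N\in\mod\Lambda$ the long exact sequence of $\Ext_{\Lambda}(N,-)$, together with $\Ext^{\geq 1}_{\Lambda}(N,I)=0$, yields $\Ext^{i}_{\Lambda}(N,\Omega^{-1}T)\cong \Ext^{i+1}_{\Lambda}(N,T)$ for all $i\geq 1$; since $\id_{\Lambda}T\leq 1$ the right-hand side vanishes, so $\Omega^{-1}T$ is injective. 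As $\tau^{-1}$ sends injective modules to $0$, we conclude $\tau^{-1}\Omega^{-1}T=0$.

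Next I would feed this back into Proposition \ref{3.4}. With $\tau^{-1}\Omega^{-1}T=0$, each of $\Hom_{\Lambda}(\tau^{-1}\Omega^{-1}T_{\Lambda},\tau T_{\Lambda})$ and $\Hom_{\Lambda}(P,\tau^{-1}\Omega^{-1}T_{\Lambda})$ is automatically zero, one of the two arguments being the zero module. Hence condition (2) of Proposition \ref{3.4} is equivalent to ``$(T,P)$ is a support $\tau$-tilting pair in $\mod\Lambda$'', and the equivalence (1)$\Leftrightarrow$(2) of the corollary is exactly that of Proposition \ref{3.4}. For the ``in particular'' part I would specialize to $P=0$ and use the remark in Section 2 that $(M,0)$ is a support $\tau$-tilting pair precisely when $M$ is $\tau$-tilting, applied over both $\Lambda$ and $\Gamma$; this turns the $P=0$ case of the equivalence just obtained into the asserted statement about $\tau$-tilting modules.

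I do not expect any real obstacle: the proof is a short reduction to Proposition \ref{3.4}. The only step worth stating carefully is the chain ``$\id_{\Lambda}T\leq 1\Rightarrow\Omega^{-1}T$ injective $\Rightarrow\tau^{-1}\Omega^{-1}T=0$'', and both links are standard facts about cosyzygies and the inverse Auslander--Reiten translate, independent of anything specific to split-by-nilpotent extensions or cluster-tilted algebras.
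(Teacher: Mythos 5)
Your proposal is correct and follows exactly the paper's route: the paper likewise deduces Corollary \ref{3.5} from Proposition \ref{3.4} via the observation that $\id_{\Lambda}T\leq 1$ forces $\tau^{-1}\Omega^{-1}T=0$ (a fact the paper states without proof and you justify correctly by noting $\Omega^{-1}T$ is injective and $\tau^{-1}$ annihilates injectives). No issues.
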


Let $\mathcal{C}$ be a full subcategory of $\mod \Lambda$. We write
$$\mathcal{C}^{\perp}:=\{M\in \mod \Lambda\mid\Hom_\Lambda(C,M)=0\ \text{for any}\ C\in\mathcal{C}\},$$
$$^{\perp}\mathcal{C}:=\{M\in \mod \Lambda\mid\Hom_\Lambda(M,C)=0\ \text{for any}\ C\in\mathcal{C}\}.$$
Recall that a pair $(\mathcal{T}, \mathcal{F})$ of full subcategories of $\mod \Lambda$ is called a {\it torsion pair}
if $\mathcal{T}^\perp=\mathcal{F}$ and $^\perp \mathcal{F}=\mathcal{T}$. Given a $\tau$-tilting $\Lambda$-module $T$, it will induce a torsion pair
$(\mathcal{T}(T), \mathcal{F}(T)):=(^\perp(\tau T), T^{\perp})$ (\cite{AIR}).

\begin{proposition}\label{3.6}
Let $X_\Gamma\in\mod\Gamma$, and let $T\in\mod\Lambda$ be $\tau$-tilting such that $\Hom_\Lambda(T\otimes_\Lambda E,\tau T_\Lambda)=0$.
Then we have
\begin{enumerate}
\item[(1)] $X_\Gamma\in$$ \mathcal{T}(T\otimes_\Lambda \Gamma)$ if and only if $X\otimes _\Gamma \Gamma_\Lambda\in\mathcal{T}(T)$.
\item[(2)] $X_{\Gamma}\in \mathcal{F}(T\otimes_\Lambda \Gamma)$ if and only if $\Hom_\Gamma(_\Lambda \Gamma_\Gamma, X_\Gamma)\in \mathcal{F}(T)$.
\end{enumerate}
\end{proposition}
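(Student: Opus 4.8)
The plan is to prove both parts by translating the torsion/torsion-free membership conditions through the functors relating $\mod\Lambda$ and $\mod\Gamma$, using the adjunction isomorphisms and Lemma \ref{2.9} as in the proof of Theorem \ref{3.1}. Recall that since $T$ is $\tau$-tilting with $\Hom_\Lambda(T\otimes_\Lambda E,\tau T_\Lambda)=0$, Corollary \ref{3.2} gives that $T\otimes_\Lambda\Gamma$ is $\tau$-tilting in $\mod\Gamma$, so $(\mathcal{T}(T\otimes_\Lambda\Gamma),\mathcal{F}(T\otimes_\Lambda\Gamma))=({}^\perp(\tau(T\otimes_\Lambda\Gamma)), (T\otimes_\Lambda\Gamma)^\perp)$ is a genuine torsion pair, and likewise for $T$ in $\mod\Lambda$.

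For part (1), I would start from the definition $X_\Gamma\in\mathcal{T}(T\otimes_\Lambda\Gamma)$ iff $\Hom_\Gamma(X,\tau(T\otimes_\Lambda\Gamma))=0$. Using Lemma \ref{2.9}, rewrite $\tau(T\otimes_\Lambda\Gamma)\cong\Hom_\Lambda({}_\Gamma\Gamma_\Lambda,\tau T_\Lambda)$, and then apply the adjunction $\Hom_\Gamma(X,\Hom_\Lambda(\Gamma_\Lambda,\tau T))\cong\Hom_\Lambda(X\otimes_\Gamma\Gamma_\Lambda,\tau T)$. The right-hand side vanishing is exactly $X\otimes_\Gamma\Gamma_\Lambda\in{}^\perp(\tau T)=\mathcal{T}(T)$. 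So part (1) is essentially a one-line chain of natural isomorphisms. The only point to check carefully is that the bimodule structure on $\Gamma$ matches the one appearing in Lemma \ref{2.9} and in the restriction functor $-\otimes_\Gamma\Lambda$; here $X\otimes_\Gamma\Gamma_\Lambda$ and $X\otimes_\Gamma\Lambda_\Lambda$ should be identified appropriately, but since $E\subseteq\mathrm{rad}\,\Gamma$ and ${}_\Gamma\Gamma_\Lambda$ restricts to the right $\Lambda$-module making things work, this is routine bookkeeping.

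For part (2), I would dualize: $X_\Gamma\in\mathcal{F}(T\otimes_\Lambda\Gamma)$ iff $\Hom_\Gamma(T\otimes_\Lambda\Gamma,X)=0$. Apply the adjunction $\Hom_\Gamma(T\otimes_\Lambda\Gamma,X)\cong\Hom_\Lambda(T,\Hom_\Gamma({}_\Lambda\Gamma_\Gamma,X))$ (the same adjunction used in the second display of the proof of Theorem \ref{3.1}, with $P$ replaced by $T$). The right-hand side vanishing says precisely $\Hom_\Gamma({}_\Lambda\Gamma_\Gamma,X)\in T^\perp=\mathcal{F}(T)$. So again the statement reduces to a short computation with the Hom-tensor adjunction.

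The main obstacle is not any deep argument but making sure the hypothesis $\Hom_\Lambda(T\otimes_\Lambda E,\tau T_\Lambda)=0$ is genuinely used where needed — it is what guarantees via Corollary \ref{3.2} that $T\otimes_\Lambda\Gamma$ is $\tau$-tilting, hence that $\mathcal{T}(T\otimes_\Lambda\Gamma)$ and $\mathcal{F}(T\otimes_\Lambda\Gamma)$ are defined as one half of a torsion pair and that the formula $\mathcal{T}(T\otimes_\Lambda\Gamma)={}^\perp(\tau(T\otimes_\Lambda\Gamma))$ applies; without it the statement would not even be well-posed. A secondary subtlety is tracking which side the module structures sit on in each tensor and Hom — in particular that $\Hom_\Gamma({}_\Lambda\Gamma_\Gamma,X_\Gamma)$ and $X\otimes_\Gamma\Gamma_\Lambda$ are the correct "restriction-type" functors dual to $-\otimes_\Lambda\Gamma$ — but all of these are the standard functors recorded in Section 2.2, so I expect the verification to be straightforward once the adjunctions are set up in the right order.
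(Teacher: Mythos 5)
Your proposal is correct and is essentially identical to the paper's own proof: both verify via Corollary \ref{3.2} that $T\otimes_\Lambda\Gamma$ is $\tau$-tilting and then reduce each membership condition to the two adjunction isomorphisms $\Hom_\Gamma(X_\Gamma,\tau(T\otimes_\Lambda\Gamma))\cong\Hom_\Lambda(X\otimes_\Gamma\Gamma_\Lambda,\tau T_\Lambda)$ and $\Hom_\Gamma(T\otimes_\Lambda\Gamma,X_\Gamma)\cong\Hom_\Lambda(T_\Lambda,\Hom_\Gamma({}_\Lambda\Gamma_\Gamma,X_\Gamma))$. The only small quibble is your side remark about identifying $X\otimes_\Gamma\Gamma_\Lambda$ with $X\otimes_\Gamma\Lambda_\Lambda$: these are genuinely different functors (restriction along $\Lambda\hookrightarrow\Gamma$ versus reduction modulo $E$), but no such identification is needed since the adjunction produces $X\otimes_\Gamma\Gamma_\Lambda$ directly, which is exactly what the statement asserts.
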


\begin{proof}
Since $\Hom_\Lambda(T\otimes_\Lambda E,\tau T_\Lambda)=0$, we have that $T\otimes_\Lambda \Gamma$ is a $\tau$-tilting $\Gamma$-module
by Corollary \ref{3.2} and it will induce a torsion pair. Note that there exist two isomorphisms
$$\Hom_\Gamma(X_\Gamma, \tau(T\otimes_\Lambda \Gamma))\cong \Hom_\Gamma(X_\Gamma, \Hom_\Lambda(_\Gamma \Gamma_\Lambda, \tau T_\Lambda))
\cong\Hom_\Lambda(X\otimes _\Gamma \Gamma_\Lambda, \tau T_\Lambda),$$
$$\Hom_\Gamma(T\otimes_\Lambda \Gamma, X_\Gamma)\cong \Hom_\Lambda(T_\Lambda, \Hom_\Gamma(_\Lambda \Gamma_\Gamma, X_\Gamma)).$$
The result is obvious.
\end{proof}

For a $\Gamma$-module $U_\Gamma$, $U\otimes_\Gamma \Lambda$ is a $\Lambda$-module. If $U_\Gamma$ is $\tau$-tilting and
$U\otimes_\Gamma \Lambda\otimes_\Lambda \Gamma \cong U_\Gamma$, then $U\otimes_\Gamma \Lambda$ is a $\tau$-tilting $\Lambda$-module by Theorem \ref{3.1}.
As a slight generalization of this observation, the following result gives a converse construction of Corollary \ref{3.2}.

\begin{proposition}\label{3.7}
Assume that $U_\Gamma$ is a $\Gamma$-module such that $U\otimes_\Gamma \Lambda\otimes_\Lambda \Gamma\in\add U_\Gamma$.
\begin{enumerate}
\item[(1)] If $U_\Gamma$ is $\tau$-rigid, then $U\otimes_\Gamma \Lambda$ is a $\tau$-rigid $\Lambda$-module.
\item[(2)] If $U_\Gamma$ is $\tau$-tilting and $U\otimes_\Gamma \Lambda$ is basic, then $U\otimes_\Gamma \Lambda$ is a $\tau$-tilting $\Lambda$-module.
\end{enumerate}
\end{proposition}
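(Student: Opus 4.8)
The plan is to put $V:=U\otimes_\Gamma\Lambda$ and treat the two parts in turn, recycling the chain of isomorphisms from the proof of Theorem \ref{3.1}.

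\emph{Part (1).} First I would note that, running the computation in the proof of Theorem \ref{3.1} with $T$ replaced by $V$, one obtains an isomorphism
\[
\Hom_\Gamma\!\big(V\otimes_\Lambda\Gamma,\ \tau(V\otimes_\Lambda\Gamma)\big)\ \cong\ \Hom_\Lambda(V,\tau V)\ \oplus\ \Hom_\Lambda(V\otimes_\Lambda E,\ \tau V).
\]
Now $V\otimes_\Lambda\Gamma=U\otimes_\Gamma\Lambda\otimes_\Lambda\Gamma\in\add U$ by hypothesis, and since $U_\Gamma$ is $\tau$-rigid every module in $\add U$ is $\tau$-rigid ($\tau$ commutes with finite direct sums, and $\Hom_\Gamma(X,\tau X)$ is a direct summand of a power of $\Hom_\Gamma(U,\tau U)=0$ whenever $X\in\add U$). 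Hence the left-hand side vanishes, so in particular $\Hom_\Lambda(V,\tau V)=0$ and $V$ is $\tau$-rigid. (The same vanishing also yields $\Hom_\Lambda(V\otimes_\Lambda E,\tau V)=0$, linking back to Corollary \ref{3.2}, though this is not needed below.)

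\emph{Part (2).} By Part (1) the module $V$ is $\tau$-rigid, so it only remains to prove $|V|=|\Lambda|$; then $V$ is $\tau$-tilting by Definition \ref{2.1}. For the upper bound, Lemma \ref{2.8}(2) gives $|V|=|V\otimes_\Lambda\Gamma|$, and $V\otimes_\Lambda\Gamma\in\add U$ forces its indecomposable summands to be among those of $U$, whence $|V|\le|U|=|\Gamma|=|\Lambda|$ (using that $U$ is $\tau$-tilting and Lemma \ref{2.8}(3)). For the lower bound I would first observe that $U$ is itself basic: for every nonzero $N\in\mod\Gamma$ we have $N\otimes_\Gamma\Lambda\cong N/NE\neq 0$ since $E$ is nilpotent (otherwise $N=NE=NE^{2}=\cdots=0$), so a repeated indecomposable summand of $U$ would produce one of $V=U\otimes_\Gamma\Lambda$, contradicting that $V$ is basic. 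Writing $U=\bigoplus_{i=1}^{m}N_i$ with the $N_i$ pairwise non-isomorphic indecomposables and $m=|\Gamma|=|\Lambda|$, we get $V=\bigoplus_{i=1}^{m}(N_i\otimes_\Gamma\Lambda)$ with each summand nonzero; since $V$ is basic the modules $N_i\otimes_\Gamma\Lambda$ are basic and pairwise share no indecomposable summand, so $|V|=\sum_{i=1}^{m}|N_i\otimes_\Gamma\Lambda|\ge m=|\Lambda|$. Combining the two bounds yields $|V|=|\Lambda|$.

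The bimodule and adjunction bookkeeping is routine and is already carried out in the proof of Theorem \ref{3.1}; the one point that needs care is the lower bound in Part (2), namely that the assignment $N\mapsto N\otimes_\Gamma\Lambda$ neither annihilates an indecomposable summand of $U$ nor lets two of them collapse together. This is exactly where nilpotence of $E$ and the hypothesis that $U\otimes_\Gamma\Lambda$ be basic enter: without the latter the count $|V|\ge|\Lambda|$ can fail (take $U$ non-basic), while nilpotence of $E$ is what guarantees $N\otimes_\Gamma\Lambda\neq 0$.
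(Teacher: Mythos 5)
Your proof is correct, but it follows a genuinely different route from the paper's. For part (1), the paper takes a minimal projective presentation $P_1\otimes_\Lambda\Gamma\to P_0\otimes_\Lambda\Gamma\to U_\Gamma\to 0$, applies $-\otimes_\Gamma\Lambda$, and verifies the surjectivity criterion of Lemma \ref{2.3} by lifting a map $g\uc P_1\to U\otimes_\Gamma\Lambda$ through the split inclusion of $U\otimes_\Gamma\Lambda\otimes_\Lambda\Gamma$ into $U_\Gamma$ and then tensoring back down; you instead rerun the adjunction computation of Theorem \ref{3.1} with $T$ replaced by $V=U\otimes_\Gamma\Lambda$ and observe that $\Hom_\Lambda(V,\tau V)$ is a direct summand of $\Hom_\Gamma(V\otimes_\Lambda\Gamma,\tau(V\otimes_\Lambda\Gamma))$, which vanishes because $V\otimes_\Lambda\Gamma\in\add U_\Gamma$ and $\tau$-rigidity is inherited by $\add U_\Gamma$. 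This is shorter, avoids the diagram chase entirely, and as you note even yields the extra vanishing $\Hom_\Lambda(V\otimes_\Lambda E,\tau V)=0$ needed to apply Corollary \ref{3.2} to $V$. For part (2), the paper asserts $|U\otimes_\Gamma\Lambda|\ge |U_\Gamma|$ with only a citation of Lemma \ref{2.8}(3) and then concludes via Theorem \ref{2.2}; you prove both bounds on $|V|$ directly, and your lower bound argument (that $N\otimes_\Gamma\Lambda\cong N/NE\neq 0$ for $N\neq 0$ by nilpotence of $E$, that $U_\Gamma$ must itself be basic once $V$ is, and that the summands $N_i\otimes_\Gamma\Lambda$ cannot collapse together when $V$ is basic) actually supplies the Krull--Schmidt bookkeeping that the paper leaves implicit, while dispensing with Theorem \ref{2.2} altogether. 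Both approaches rest on the same hypotheses; yours trades the paper's presentation-lifting argument for heavier reuse of the functorial isomorphisms already established in Theorem \ref{3.1}.
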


\begin{proof}
(1) Let $U_\Gamma$ be $\tau$-rigid and
$$P_1\otimes_\Lambda \Gamma \stackrel{f_0}{\longrightarrow}P_0\otimes_\Lambda \Gamma {\longrightarrow} U_\Gamma{\longrightarrow} 0$$
a minimal projective presentation of $U$ in $\mod \Gamma$ with $P_0,P_1$ projective $\Lambda$-modules.
Applying the functor $-\otimes_\Gamma \Lambda$ to it, we obtain a projective presentation
$$P_1 \stackrel{f_0\otimes 1_\Lambda}{\longrightarrow}P_0 {\longrightarrow} U_\Gamma\otimes_\Gamma \Lambda{\longrightarrow} 0$$
of $U\otimes_\Gamma \Lambda$ in $\mod\Lambda$.
To prove that $U\otimes_\Gamma \Lambda$ is a $\tau$-rigid $\Lambda$-module, it suffices to show
$\Hom_\Lambda(f_0\otimes 1_\Lambda,U\otimes_\Gamma \Lambda)$ is epic by Lemma \ref{2.3}.

Let $g\in \Hom_\Lambda(P_1,U\otimes_\Gamma \Lambda)$. Then $g\otimes 1_\Gamma\in
\Hom_\Gamma(P_1\otimes _\Lambda \Gamma,U\otimes_\Gamma \Lambda\otimes_\Lambda \Gamma)$.
By assumption, we have $U\otimes_\Gamma \Lambda\otimes_\Lambda \Gamma $ $\in\add U_\Gamma$.
Without loss of generality, assume that $U\otimes_\Gamma \Lambda\otimes_\Lambda \Gamma $ is basic,
and hence it is a direct summand of $U_\Gamma$. Then there exist a canonical embedding
$\lambda: U\otimes_\Gamma \Lambda\otimes_\Lambda \Gamma \to U_\Gamma$
and a canonical epimorphism
$\pi:U_\Gamma \to U\otimes_\Gamma \Lambda\otimes_\Lambda \Gamma$ such that $\pi\lambda=1_{U\otimes_\Gamma \Lambda\otimes_\Lambda \Gamma}$.
Consider the following diagram
\[\xymatrix{P_1\otimes_\Lambda\Gamma\ar[r]^{f_0}\ar[d]_{g\otimes 1_\Gamma}&P_0\otimes_\Lambda \Gamma\ar[r]\ar@{.>}[ld]_{\pi i}\ar@{.>}[ldd]^{i}&U_\Gamma\ar[r]&0\\
U\otimes_\Gamma \Lambda\otimes_\Lambda \Gamma\ar@<.5ex>[d]^{\lambda}&&&\\
 U_\Gamma. \ar@<.5ex>[u]^{\pi} &&&
}\]
Since $\Hom_\Gamma(f_0,U_\Gamma)$ is epic by Lemma \ref{2.3},
there exists $i\in \Hom_\Gamma(P_0\otimes _\Lambda \Gamma, U_\Gamma)$ such that $\lambda(g\otimes 1_\Gamma)=if_0$. Then we have
$$g\otimes 1_\Gamma=1_{U\otimes_\Gamma \Lambda\otimes_\Lambda \Gamma}(g\otimes 1_\Gamma)=\pi\lambda (g\otimes 1_\Gamma)=(\pi i)f_0,$$
and
$$g\cong g\otimes 1_\Gamma\otimes 1_\Lambda \cong ((\pi i) f_{0})\otimes 1_\Lambda \cong ((\pi i)\otimes 1_\Lambda)(f_0\otimes 1_\Lambda).$$
Therefore $\Hom_\Lambda(f_{0}\otimes 1_\Lambda, U\otimes_\Gamma \Lambda)$ is epic.

(2) If $U_\Gamma$ is $\tau$-tilting, then $|U\otimes_\Gamma \Lambda|\geq |U_\Gamma|=|\Gamma|=|\Lambda|$ by Lemma \ref{2.8}(3).
Thus $U\otimes_\Gamma \Lambda$ is a $\tau$-tilting $\Lambda$-module when it is basic by (1) and Theorem \ref{2.2}.
\end{proof}

However, $U\otimes_\Gamma \Lambda$ may not be basic even if $U_\Gamma$ is basic. Let $M(U\otimes_\Gamma \Lambda)$
stand for the maximal basic direct summand of $U\otimes_\Gamma \Lambda$, that is, the direct sum of
all indecomposable direct summands of $U\otimes_\Gamma \Lambda$ which are pairwise non-isomorphic.

\begin{example}\label{3.8}
{\rm Let $\Lambda$ be the algebra given by the quiver
$$1\longrightarrow 2$$
and $\Gamma$ the algebra given by the quiver
\[\xymatrix{1\ar@/^/[r]^\alpha & \ar@/^/[l]^\beta 2
}\]
with the relation $\alpha\beta=0$. Then $\Gamma$ is the split extension of $\Lambda$ by the nilpotent $E$ generated by $\beta$
and $U_\Gamma=S_{2}\oplus e_2\Gamma$ is a $\tau$-tilting $\Gamma$-module, where $S_2$ is the simple $\Gamma$-module corresponding
to the vertex $2$. Applying the functor $-\otimes_\Gamma \Lambda$ to the projective presentation
$$0{\longrightarrow}e_1\Gamma {\longrightarrow} (e_2\Gamma)^2 {\longrightarrow} U_\Gamma {\longrightarrow} 0$$
of $U_\Gamma$, we get an exact sequence
$$e_1\Lambda \buildrel {0}\over {\longrightarrow} (e_2\Lambda)^2{\longrightarrow} U\otimes_\Gamma \Lambda{\longrightarrow} 0$$
in $\mod \Lambda$. So $U\otimes_\Gamma \Lambda\cong (e_2\Lambda)^2$ and it is not basic. Note that
$U\otimes_\Gamma \Lambda\otimes_\Lambda \Gamma $ $\in\add U_\Gamma$
because $U\otimes_\Gamma \Lambda\otimes_\Lambda \Gamma\cong (e_2\Gamma)^2$. Moreover,
$M(U\otimes_\Gamma \Lambda)\cong e_2\Lambda$ is a support $\tau$-tilting $\Lambda$-module.}
\end{example}

We do not know whether the answer to the following question is positive or not.

\begin{question}\label{3.9}
Under the condition of Proposition \ref{3.7}, if $U_\Gamma$ is $\tau$-tilting,
is then $M(U\otimes_\Gamma \Lambda)$ a support $\tau$-tilting $\Lambda$-module?
\end{question}

\subsection{Left mutations}

Let $T$ be a support $\tau$-tilting $\Lambda$-module such that $T\otimes_\Lambda \Gamma$ is a support $\tau$-tilting $\Gamma$-module.
By Lemma \ref{2.8}(1), all indecomposable summands of $T_\Lambda\otimes_\Lambda \Gamma$ are of the forms $X\otimes_\Lambda \Gamma$
for some indecomposable summand $X$ of $T$. In the following, we investigate the relationship between $Q(s\tau$-$\tilt\Lambda)$
and $Q(s\tau$-$\tilt\Gamma)$.

\begin{theorem}\label{3.10}
Let $T_1,T_2\in\mod\Lambda$ such that $T_1\otimes_\Lambda \Gamma$ and $T_2\otimes_\Lambda \Gamma$ are support $\tau$-tilting $\Gamma$-modules.
Then the following statements are equivalent.
\begin{enumerate}
\item[(1)] $T_1\otimes_\Lambda \Gamma$ is a left mutation of $T_2\otimes_\Lambda \Gamma$.
\item[(2)] $T_1$ is a left mutation of $T_2$.
\end{enumerate}
\end{theorem}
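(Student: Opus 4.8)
The plan is to reduce the statement to the definition of left mutation (Definition \ref{2.5}) together with the fact that mutation of support $\tau$-tilting modules commutes with the functor $-\otimes_\Lambda\Gamma$. First I would record the underlying module-theoretic correspondence: by Lemma \ref{2.8}(1) the assignment $N_\Lambda\mapsto N_\Lambda\otimes_\Lambda\Gamma$ is a bijection on isomorphism classes of indecomposable summands, so if $T_2=U\oplus X$ with $X$ indecomposable, then $T_2\otimes_\Lambda\Gamma=(U\otimes_\Lambda\Gamma)\oplus(X\otimes_\Lambda\Gamma)$ with $X\otimes_\Lambda\Gamma$ indecomposable, and $U\otimes_\Lambda\Gamma$ is an almost complete $\tau$-tilting $\Gamma$-module by Lemma \ref{2.8}(2)--(3). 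The key point I would then establish is that mutation at $X$ corresponds to mutation at $X\otimes_\Lambda\Gamma$: since $(U,0)$ (or $(U',Q')$ in the support case, tracking the projective part via $P\otimes_\Lambda\Gamma$) is an almost complete $\tau$-tilting pair with exactly two complements by \cite[Theorem 2.18]{AIR}, and since $T_1\otimes_\Lambda\Gamma$ and $T_2\otimes_\Lambda\Gamma$ are both support $\tau$-tilting completions of $U\otimes_\Lambda\Gamma$ (this uses that $T_1,T_2$ share the summand $U$, which I will argue below), they must be the two mutations of each other over $\Gamma$, and likewise $T_1,T_2$ are the two mutations of each other over $\Lambda$. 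So the only content left is to match up \emph{left} with \emph{left}, i.e. to show $X\notin\Fac U$ if and only if $X\otimes_\Lambda\Gamma\notin\Fac(U\otimes_\Lambda\Gamma)$.

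For that equivalence I would argue in both directions using the two functors. If $X\otimes_\Lambda\Gamma\in\Fac(U\otimes_\Lambda\Gamma)$, apply $-\otimes_\Gamma\Lambda$ to a surjection $(U\otimes_\Lambda\Gamma)^{(m)}\twoheadrightarrow X\otimes_\Lambda\Gamma$; since $-\otimes_\Gamma\Lambda$ is right exact and $(-\otimes_\Lambda\Gamma_\Gamma)\otimes_\Gamma\Lambda\cong 1_{\mod\Lambda}$, this yields a surjection $U^{(m)}\twoheadrightarrow X$, so $X\in\Fac U$. Conversely, if $X\in\Fac U$, apply the right exact functor $-\otimes_\Lambda\Gamma$ to a surjection $U^{(m)}\twoheadrightarrow X$ to get $(U\otimes_\Lambda\Gamma)^{(m)}\twoheadrightarrow X\otimes_\Lambda\Gamma$, hence $X\otimes_\Lambda\Gamma\in\Fac(U\otimes_\Lambda\Gamma)$. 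Thus $X\notin\Fac U\iff X\otimes_\Lambda\Gamma\notin\Fac(U\otimes_\Lambda\Gamma)$, which is exactly the clause distinguishing left from right mutation in Definition \ref{2.5}.

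The step I expect to be the main obstacle is the claim that $T_1$ and $T_2$ genuinely share an almost complete $\tau$-tilting summand $U$ — i.e.\ that ``$T_1\otimes_\Lambda\Gamma$ is a mutation of $T_2\otimes_\Lambda\Gamma$'' really does descend to ``$T_1$ is a mutation of $T_2$'' and not just ``$T_1,T_2$ are both support $\tau$-tilting.'' Here I would use Lemma \ref{2.8}(1) to pull the common summand $U\otimes_\Lambda\Gamma$ of $T_1\otimes_\Lambda\Gamma$ and $T_2\otimes_\Lambda\Gamma$ back to a common summand $U$ of $T_1$ and $T_2$ (the bijection on indecomposable summands is compatible with taking common summands), note that $|U|=|T_i\otimes_\Lambda\Gamma|-1=|\Gamma|-1=|\Lambda|-1$ by Lemma \ref{2.8}(2)--(3), and observe that $U$ is $\tau$-rigid because $U\otimes_\Lambda\Gamma$ is (by Theorem \ref{3.1}, applied to the pair $(U,0)$, or rather the relevant restriction argument); hence $(U,0)$ — or the appropriate support pair, with projective part handled symmetrically via $P\otimes_\Lambda\Gamma$ — is an almost complete $\tau$-tilting pair in $\mod\Lambda$ with $T_1,T_2$ as two of its completions. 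Since it has exactly two complements by \cite[Theorem 2.18]{AIR} and $T_1\otimes_\Lambda\Gamma\neq T_2\otimes_\Lambda\Gamma$ forces $T_1\neq T_2$ (again by the bijection of Lemma \ref{2.8}(1)), the modules $T_1$ and $T_2$ are mutations of each other. Combining this with the $\Fac$-equivalence of the previous paragraph gives the theorem. Throughout I would keep the projective direct summands in the picture so the argument covers support $\tau$-tilting pairs, not just $\tau$-tilting modules, but this is bookkeeping rather than a new idea.
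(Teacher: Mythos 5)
Your proposal is correct and follows essentially the same route as the paper's proof: descend the common almost complete pair via the bijection of Lemma \ref{2.8}(1), and match ``left'' with ``left'' by transporting surjections along the right exact functors $-\otimes_\Lambda\Gamma$ and $-\otimes_\Gamma\Lambda$ together with $-\otimes_\Lambda\Gamma\otimes_\Gamma\Lambda\cong 1_{\mod\Lambda}$. You merely spell out both directions of the $\Fac$ equivalence and the descent of the shared summand, which the paper compresses into ``similarly'' and a citation of Lemma \ref{2.8}(1).
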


\begin{proof}
$(1)\Rightarrow (2)$ Since $T_1\otimes_\Lambda \Gamma$ and $T_2\otimes_\Lambda \Gamma$ are support $\tau$-tilting $\Gamma$-modules by assumption,
$T_1$ and $T_2$ are support $\tau$-tilting $\Lambda$-modules by Theorem \ref{3.1}.

Let $T_1\otimes_\Lambda \Gamma=\mu^-_{X\otimes_\Lambda \Gamma}(T_2\otimes_\Lambda \Gamma)$ for some indecomposable $\Lambda$-module $X$.
Assume that $(T_1\otimes_\Lambda \Gamma, P_1\otimes_\Lambda \Gamma)$ and $(T_2\otimes_\Lambda \Gamma, P_2\otimes_\Lambda \Gamma)$ are
support $\tau$-tilting pairs having the same almost complete support $\tau$-tilting pair $(U\otimes_\Lambda \Gamma, Q\otimes_\Lambda \Gamma)$,
where $U$ and $Q$ are $\Lambda$-modules. Then by Lemma \ref{2.8}(1), $(T_1, P_1)$ and $(T_2, P_2)$ have the same almost complete support $\tau$-tilting pair
$(U, Q)$ and are mutations of each other.

Because $T_2\otimes_\Lambda \Gamma=(X\otimes_\Lambda \Gamma) \oplus (U\otimes_\Lambda \Gamma)$.
we have $T_2\cong X\oplus U$. It suffices to show $X\notin \Fac U$. Otherwise, there exists an epimorphism
$U^{(n)}{\rightarrow}X{\rightarrow}0$ in $\mod\Lambda$ for some $n\geq 1$, which yields an epimorphism
$U^{(n)}\otimes_\Lambda \Gamma(\cong (U\otimes_\Lambda \Gamma)^{(n)}){\rightarrow}X\otimes_\Lambda \Gamma{\rightarrow}0$ in $\mod \Gamma$.
It implies $X\otimes_\Lambda \Gamma\in \Fac(U\otimes_\Lambda \Gamma)$, a contradiction.

Similarly, we get $(2)\Rightarrow (1)$.
\end{proof}

As an immediate consequence of Theorem \ref{3.10} and its proof, we get the following

\begin{corollary}\label{3.11}
Let $T_1,T_2\in\mod\Lambda$ such that $T_1\otimes_\Lambda \Gamma$ and $T_2\otimes_\Lambda \Gamma$
are support $\tau$-tilting $\Gamma$-modules, and let $X$ be the indecomposable $\Lambda$-module
as in the proof of Theorem \ref{3.10}. Then the following statements are equivalent.
\begin{enumerate}
\item[(1)] $T_1\otimes_\Lambda \Gamma=\mu^-_{X\otimes_\Lambda \Gamma}(T_2\otimes_\Lambda \Gamma)$.
\item[(2)] $T_1=\mu^-_{X}T_2$.
\end{enumerate}
\end{corollary}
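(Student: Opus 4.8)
The plan is to rerun the proof of Theorem~\ref{3.10}, but with the exchanged indecomposable summand kept explicit throughout, so that one sees directly that the summand deleted on the $\Gamma$-side is the image, under the bijection of Lemma~\ref{2.8}(1), of the summand deleted on the $\Lambda$-side. Concretely, writing $T_2=U\oplus X$ with $X$ indecomposable, the only substantive point to isolate is the equivalence
$$X\notin\Fac U \iff X\otimes_\Lambda\Gamma\notin\Fac(U\otimes_\Lambda\Gamma).$$

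First I would set up the same dictionary as in the proof of Theorem~\ref{3.10}: let $(U,Q)$, respectively $(U\otimes_\Lambda\Gamma,Q\otimes_\Lambda\Gamma)$, be the common almost complete support $\tau$-tilting pair of $(T_1,P_1),(T_2,P_2)$, respectively of $(T_1\otimes_\Lambda\Gamma,P_1\otimes_\Lambda\Gamma),(T_2\otimes_\Lambda\Gamma,P_2\otimes_\Lambda\Gamma)$. By Lemma~\ref{2.8}(1) the indecomposable summands correspond under $-\otimes_\Lambda\Gamma$, so $T_2\otimes_\Lambda\Gamma=(U\otimes_\Lambda\Gamma)\oplus(X\otimes_\Lambda\Gamma)$ with $X\otimes_\Lambda\Gamma$ indecomposable, and, since an almost complete support $\tau$-tilting pair has exactly two complements by \cite[Theorem 2.18]{AIR}, the \emph{unsigned} mutation of $(T_2,P_2)$ at $X$ corresponds to the unsigned mutation of $(T_2\otimes_\Lambda\Gamma,P_2\otimes_\Lambda\Gamma)$ at $X\otimes_\Lambda\Gamma$; this part is already contained in the proof of Theorem~\ref{3.10}. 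It then remains to match the $\Fac$-condition. If $X\in\Fac U$, applying the right-exact functor $-\otimes_\Lambda\Gamma$ to an epimorphism $U^{(n)}\to X\to 0$ gives $X\otimes_\Lambda\Gamma\in\Fac(U\otimes_\Lambda\Gamma)$; this is the implication used in the proof of Theorem~\ref{3.10}. Conversely, if $X\otimes_\Lambda\Gamma\in\Fac(U\otimes_\Lambda\Gamma)$, applying the right-exact functor $-\otimes_\Gamma\Lambda$ to an epimorphism $(U\otimes_\Lambda\Gamma)^{(n)}\to X\otimes_\Lambda\Gamma\to 0$ and using the natural isomorphism $-\otimes_\Lambda\Gamma_\Gamma\otimes_\Gamma\Lambda\cong 1_{\mod\Lambda}$ from Section~2 yields an epimorphism $U^{(n)}\to X\to 0$, so $X\in\Fac U$. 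Combining the unsigned statement with this equivalence gives $T_1=\mu^-_X T_2$ if and only if $T_1\otimes_\Lambda\Gamma=\mu^-_{X\otimes_\Lambda\Gamma}(T_2\otimes_\Lambda\Gamma)$, which is exactly (1)~$\Leftrightarrow$~(2).

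The only delicate point, and hence the main obstacle, is bookkeeping rather than anything deep: one must check that the single symbol $X$ is genuinely unambiguous, i.e. that the indecomposable deleted from $T_2$ is carried by the Lemma~\ref{2.8}(1) bijection to the indecomposable deleted from $T_2\otimes_\Lambda\Gamma$, and similarly that the two common almost complete pairs correspond under $-\otimes_\Lambda\Gamma$. Here it is worth noting that, by Definition~\ref{2.5}, a left mutation removes an honest direct summand of the \emph{module} $T_2$ (not a summand of the projective part of the pair), so the starting datum $T_2=U\oplus X$ is precisely what is needed; once the almost complete pairs and the exchanged indecomposables are matched, uniqueness of complements forces the two mutations to coincide, and the corollary follows from Theorem~\ref{3.10} and its proof.
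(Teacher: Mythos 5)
Your proposal is correct and follows essentially the same route as the paper: the paper derives Corollary \ref{3.11} directly from the proof of Theorem \ref{3.10}, which already contains both the matching of the almost complete pairs and exchanged summands via Lemma \ref{2.8}(1) and the two directions of the equivalence $X\notin\Fac U \Leftrightarrow X\otimes_\Lambda\Gamma\notin\Fac(U\otimes_\Lambda\Gamma)$ (tensoring an epimorphism with $\Gamma$ one way and with $\Lambda$ over $\Gamma$, using $-\otimes_\Lambda\Gamma\otimes_\Gamma\Lambda\cong 1_{\mod\Lambda}$, the other way). Your explicit bookkeeping of the summand $X$ just makes visible what the paper leaves implicit in the phrase ``as an immediate consequence of Theorem \ref{3.10} and its proof.''
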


Let $Q=(Q_0, Q_1)$ be a quiver. A subquiver $\widehat{Q}=(\widehat{Q}_0, \widehat{Q}_1)$ of $Q$ is called {\it full}
if $\widehat{Q}_1$ equals the set of all those arrows in $Q_1$ whose source and target both belong to $\widehat{Q}_0$
\cite[Chapter II]{ASS}. We use $fQ(s\tau$-$\tilt\Gamma)$ to denote the full subquiver of $Q(s\tau$-$\tilt\Gamma)$
whose  vertices are $T\otimes_\Lambda \Gamma$ where $T\in Q(s\tau$-$\tilt \Lambda)$, and use $fQ(s\tau$-$\tilt \Lambda)$ to denote
the full subquiver of $Q(s\tau$-$\tilt \Lambda)$ whose  vertices are those support $\tau$-tilting $\Lambda$-modules
$T$ such that $T\otimes_\Lambda \Gamma$ is a support $\tau$-tilting $\Gamma$-module. Corollary \ref{3.11} shows that
the underlying graph of  $fQ(s\tau$-$\tilt\Lambda)$ and $fQ(s\tau$-$\tilt\Gamma)$ coincide.
More precisely, if $T_1,T_2\in\mod\Lambda$ such that $T_1\otimes_\Lambda \Gamma$ and $T_2\otimes_\Lambda \Gamma$
are support $\tau$-tilting $\Gamma$-modules, then
there exists an arrow from $T_1\otimes_A\Gamma$ to $T_2\otimes_\Lambda \Gamma$ in $Q(s\tau$-$\tilt\Gamma)$ if and only if
there exists an arrow from $T_1$ to $T_2$ in  $Q(s\tau$-$\tilt\Lambda)$.

\subsection{A special case}

We now turn attention to one-point extensions. Let $\Lambda$ be an algebra and
$M\in \mod\Lambda$. The {\it one-point extension} of $\Lambda$ by $M$ is defined as the following matrix algebra
\begin{center}
$\Gamma=\left(\begin{matrix}
\Lambda & 0\\
M_\Lambda & k\\
\end{matrix}\right)$
\end{center}
with the ordinary matrix addition and the multiplication, and we write $\Gamma:=\Lambda[M]$ with $a$ the extension point.
Let $\Delta:=\Lambda \times k$, and let $E$ be the $(\Delta,\Delta)$-bimodule generated by
the arrows from $a$ to the quiver of $\Lambda$. It is easy to see that $\Gamma$ is a split extension of $\Delta$
by the nilpotent bimodule $_{\Delta}E_{\Delta}$, and $E_\Delta \cong M_\Delta$ while $D(_\Delta E)\cong S^{t}$
where $S$ is the simple module corresponding to the point $a$ and $t=|M|$ (\cite{AZ1}).

In the rest of this subsection, $\Gamma$ is a one-point extension of $\Lambda$ by a module $M$ in $\mod \Lambda$,
and $e_{a}$ is the idempotent corresponding to the extension point $a$ and $\Delta:=\Lambda \times k$.

\begin{remark}\label{3.12}
\begin{enumerate}
\item[]
\item[(1)] The algebra $\Gamma$ is a $\Delta$-$\Delta$-bimodule and a $\Lambda$-$\Lambda$-bimodule.
\item[(2)] The algebra $\Delta$ is a $\Lambda$-$\Lambda$-bimodule.
\item[(3)] For any $\Lambda$-module $X$, it can be seen as a $\Delta$-module or a $\Gamma$-module.
In fact, $$X_\Gamma\cong X_\Delta\otimes_\Delta \Gamma\cong X_\Lambda \otimes_\Lambda\Gamma.$$
\item[(4)] For any $\Delta$-module $N$, we have $N_\Delta\cong Y_\Delta\oplus S^t$ for some $t\geq 0$,
where $Y$ is a $\Lambda$-module.
\end{enumerate}
\end{remark}

We need the following two easy observations.

\begin{lemma}\label{3.13}
For any $X\in\mod\Lambda$, we have $X\otimes_\Delta E=0$.
\end{lemma}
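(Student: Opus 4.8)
The plan is to unwind the definitions of the bimodule $_\Delta E_\Delta$ and of the tensor product $X\otimes_\Delta E$, where $X$ is regarded as a $\Delta$-module in the canonical way described in Remark \ref{3.12}(3). Recall that $\Delta=\Lambda\times k$ and that $E$ is the $(\Delta,\Delta)$-bimodule generated by the arrows from the extension point $a$ into the quiver of $\Lambda$. The key structural fact, already recorded in the paragraph preceding Lemma \ref{3.12}, is that $E_\Delta\cong M_\Delta$ as a right $\Delta$-module, while on the left $D({}_\Delta E)\cong S^{t}$, where $S$ is the simple $\Delta$-module supported at $a$; equivalently, the left action of $\Delta$ on $E$ factors through the projection $\Delta\to k$ onto the copy of $k$ sitting at the vertex $a$. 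In other words, ${}_\Delta E$ is (a direct sum of copies of) the simple left module at $a$.

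First I would make precise how $X$ becomes a $\Delta$-module: a $\Lambda$-module $X$ is inflated along the projection $\Delta=\Lambda\times k\to\Lambda$, so the idempotent $e_a\in\Delta$ (corresponding to the extra vertex $a$) annihilates $X$, i.e. $Xe_a=0$. Next I would observe that, since the left $\Delta$-action on $E$ factors through the vertex $a$ in the sense above, every element of $E$ satisfies $e_a\cdot x=x$ for $x\in E$ — more precisely $E=e_a E$ once we identify the relevant idempotent, because $E$ is generated by arrows emanating from $a$. Then in the tensor product $X\otimes_\Delta E$, any elementary tensor can be rewritten as
\[
x\otimes \xi \;=\; x\otimes e_a\xi \;=\; xe_a\otimes \xi \;=\; 0\otimes\xi \;=\;0,
\]
so $X\otimes_\Delta E=0$. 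This is the whole content of the lemma.

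The only point requiring a little care — and the step I expect to be the main (minor) obstacle — is getting the idempotent bookkeeping exactly right: one must confirm that the \emph{same} idempotent $e_a$ that kills $X$ on the right acts as the identity on $E$ on the left. This follows from the explicit description of $\Gamma=\Lambda[M]$ as a triangular matrix algebra and of $E$ as the span of the "connecting" arrows out of $a$: such an arrow $\alpha\colon a\to v$ (for a vertex $v$ of $\Lambda$) satisfies $e_a\alpha=\alpha$ and $\alpha e_v=\alpha$, and composites of these with paths in $\Lambda$ keep the leftmost factor starting at $a$; hence $e_aE=E$, while $Ee_a=0$ since no such path ends at $a$. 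Combined with $Xe_a=0$ this gives the displayed chain of equalities above. Alternatively, and perhaps more cleanly, one can argue at the level of module categories: ${}_\Delta E$ is a direct sum of copies of the simple projective-free left module $S$ at $a$, and for any $\Lambda$-module $X$ (viewed over $\Delta$) one has $X\otimes_\Delta S=0$ because $S\cong \Delta e_a/\mathrm{rad}$ has a projective cover $\Delta e_a$ with $X\otimes_\Delta \Delta e_a\cong Xe_a=0$, and right-exactness of $-\otimes_\Delta S$ then forces $X\otimes_\Delta S=0$. Either route completes the proof.
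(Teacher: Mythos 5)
Your proof is correct. The fact driving both arguments is the same orthogonality of idempotents: $E$ is concentrated at the extension vertex on the left ($e_aE=E$ and $eE=0$ for every idempotent $e$ of $\Lambda$), while an inflated $\Lambda$-module $X$ satisfies $Xe_a=0$. The paper exploits this from the other side: it takes a projective presentation $e_2\Lambda\to e_1\Lambda\to X\to 0$, regards it as a presentation over $\Delta$, applies the right exact functor $-\otimes_\Delta E$, and concludes from $e_1E=e_2E=0$. You instead kill every elementary tensor directly via $x\otimes\xi=x\otimes e_a\xi=xe_a\otimes\xi=0$, and your alternative route resolves ${}_\Delta E$ (through $\Delta e_a$) rather than $X$, which is the mirror image of the paper's computation. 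Your element-level argument is, if anything, the more economical one, since it avoids choosing a presentation; the paper's version stays entirely at the level of functors and does not need to identify the precise idempotent acting as the identity on $E$. One cosmetic remark: in your alternative route the functor whose right exactness you invoke should be $X\otimes_\Delta -$ applied to the surjection $\Delta e_a\to S$ (in fact $\Delta e_a\cong S$ already, since $\Delta=\Lambda\times k$), not $-\otimes_\Delta S$; this does not affect the argument.
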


\begin{proof}
Considering the projective presentation
$$e_2\Lambda {\longrightarrow}e_1\Lambda{\longrightarrow} X{\longrightarrow} 0$$
of $X$ in $\mod \Lambda$ with $e_1,e_2$ idempotents of $\Lambda$, we get the projective presentation
$$e_2\Delta {\longrightarrow}e_1\Delta{\longrightarrow} X{\longrightarrow} 0$$
of $X$ in $\mod \Delta$. Applying the functor $-\otimes_\Delta E$ yields the following exact sequence
$$e_2E {\longrightarrow}e_1E{\longrightarrow} X\otimes_\Delta E{\longrightarrow} 0.$$
Since $E$ is generated by the arrows from $a$ to the quiver of $\Lambda$, we have $e_1E=0=e_2E=0$.
Hence $X\otimes_\Delta E=0$.
\end{proof}

\begin{lemma}\label{3.14}
$S\otimes_\Delta E\cong M_\Delta$.
\end{lemma}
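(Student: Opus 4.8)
The plan is to reduce the computation to the observation that, although $S$ is a simple $\Gamma$-module, it is \emph{projective} as a $\Delta$-module. Indeed, writing $\Delta=\Lambda\times k$ with $a$ the isolated vertex contributed by the second factor, the primitive idempotent $e_a$ satisfies $e_a\Delta=e_a(\Lambda\times k)\cong k\cong S$; hence $S\cong e_a\Delta$ is the indecomposable projective right $\Delta$-module at $a$. The standard isomorphism $eR\otimes_R X\cong eX$ then gives
$$S\otimes_\Delta E\ \cong\ e_a\Delta\otimes_\Delta E\ \cong\ e_aE.$$

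The next step is to identify $e_aE$ with all of $E$. Realising $E$ as the kernel $\left(\begin{smallmatrix}0&0\\M&0\end{smallmatrix}\right)$ of the split surjection $\Gamma\to\Delta$, one checks directly from the matrix multiplication that the left $\Delta$-action on $E$ factors through the projection $\Delta\to k$ onto the $a$-component; equivalently, $e_bE=0$ for every primitive idempotent $e_b$ of $\Lambda$. Since $1_\Delta=e_a+\sum_b e_b$, this forces $e_aE=E$. Combining this with the already recorded isomorphism $E_\Delta\cong M_\Delta$, we obtain $S\otimes_\Delta E\cong e_aE=E\cong M_\Delta$, as wanted.

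An essentially equivalent route, which reuses Lemma~\ref{3.13}, goes through the regular module: as a right $\Delta$-module one has $\Delta\cong\Lambda\oplus S$, where the summand $\Lambda$ is the inflation to $\Delta$ of the $\Lambda$-module $\Lambda$ and $S=e_a\Delta$. Tensoring with $E$, and using $\Delta\otimes_\Delta E\cong E\cong M_\Delta$ together with $\Lambda\otimes_\Delta E=0$ (Lemma~\ref{3.13}), yields $M_\Delta\cong E\cong(\Lambda\otimes_\Delta E)\oplus(S\otimes_\Delta E)=S\otimes_\Delta E$. There is no genuine obstacle in either argument; the only thing to handle carefully is the simultaneous bookkeeping of the $\Lambda$-, $\Delta$- and $\Gamma$-module structures — in particular, that the right $\Delta$-action on $E$ factors through $\Lambda$ while the left $\Delta$-action factors through $k$, and that $S$ acquires a projective $\Delta$-module structure despite being simple.
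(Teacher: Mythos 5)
Your proof is correct, and your second route is precisely the paper's own argument: decompose $\Delta\cong\Lambda\oplus S$ as a right $\Delta$-module, tensor with $E$, and kill the summand $\Lambda\otimes_\Delta E$ by Lemma~\ref{3.13}. Your first route (identifying $S\cong e_a\Delta$ as a projective $\Delta$-module and computing $S\otimes_\Delta E\cong e_aE=E$ directly from the matrix description of $E$) is a harmless variant that makes explicit the mechanism the paper leaves implicit, but it is not a genuinely different argument.
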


\begin{proof}
It follows Lemma \ref{3.13} and the following isomorphism
$$M_\Delta\cong E_\Delta\cong \Delta\otimes_\Delta E\cong(S\oplus \Lambda)\otimes_\Delta E
\cong(S\otimes_\Delta E)\oplus(\Lambda\otimes_\Delta E).$$
\end{proof}

Note that basic support $\tau$-tilting modules in $\mod \Lambda$ are exactly those forms
$T$ and $T\oplus S$ where $T$ is a support $\tau$-tilting $\Lambda$-module. Hence support $\tau$-tilting pairs in $\mod \Delta$ are exactly
those forms $(T,P\oplus S)$ and $(T\oplus S,P)$ where $P$ is a projective $\Lambda$-module such that
$(T,P)$ is a support $\tau$-tilting pair in $\mod \Lambda$.
As a consequence of Theorem \ref{3.1}, we also have the following

\begin{proposition}\label{3.15}
Let $\Gamma$ be a one-point extension of $\Lambda$ by a module $M$ in $\mod \Lambda$, and let $e_{a}$
be the idempotent corresponding to the extension point $a$. Then for a pair $(T,P)$ in $\mod \Lambda$ with $P$ projective, we have
\begin{enumerate}
\item[(1)] $(T_\Gamma, P_\Gamma \oplus e_{a}\Gamma)$ is a support $\tau$-tilting pair in $\mod \Gamma$
if and only if $(T_\Lambda ,P_\Lambda)$ is a support $\tau$-tilting pair in $\mod \Lambda$.
\item[(2)] $(T_\Gamma \oplus e_{a}\Gamma, P_\Gamma)$ is a support $\tau$-tilting pair in $\mod \Gamma$
if and only if $(T_\Lambda ,P_\Lambda)$ is a support $\tau$-tilting pair in $\mod \Lambda$ and
$\Hom_\Lambda(M_\Lambda,\tau T_\Lambda)=0=\Hom_\Lambda(P_\Lambda,M_\Lambda)$.
\end{enumerate}
\end{proposition}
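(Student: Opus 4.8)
The plan is to deduce Proposition \ref{3.15} directly from Theorem \ref{3.1} applied to the split-by-nilpotent extension $\Gamma$ of $\Delta = \Lambda \times k$ by the nilpotent bimodule ${}_\Delta E_\Delta$. First I would record the dictionary between pairs over $\Lambda$ and pairs over $\Delta$: since $|\Delta| = |\Lambda| + 1$ and the indecomposable $\Delta$-modules are exactly the indecomposable $\Lambda$-modules together with the simple $S$ at the extra vertex $a$, a pair $(T', P')$ in $\mod\Delta$ with $P'$ projective is a support $\tau$-tilting pair over $\Delta$ if and only if it has one of the two shapes $(T_\Delta, P_\Delta \oplus S)$ or $(T_\Delta \oplus S, P_\Delta)$ with $(T,P)$ a support $\tau$-tilting pair over $\Lambda$ (note $S = e_a\Delta$ is projective over $\Delta$, and $\Hom_\Delta(S, T) = 0 = \Hom_\Delta(P, T)$ automatically for $\Lambda$-modules $T$, while $\Hom_\Delta(S,S)\ne 0$ forces $S$ into at most one of the two slots). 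This is the observation already stated in the paragraph preceding the proposition, so I would just cite it.

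Next I would apply Theorem \ref{3.1} to the pair $(T', P')$ over $\Delta$, using Remark \ref{3.12}(3) to identify $T'\otimes_\Delta \Gamma$ with $T_\Gamma$ (resp. $T_\Gamma \oplus e_a\Gamma$) since $S\otimes_\Delta\Gamma = e_a\Delta\otimes_\Delta\Gamma \cong e_a\Gamma$ and $X\otimes_\Delta\Gamma \cong X_\Gamma$ for $\Lambda$-modules $X$. Theorem \ref{3.1} then says $(T'\otimes_\Delta\Gamma, P'\otimes_\Delta\Gamma)$ is a support $\tau$-tilting pair over $\Gamma$ if and only if $(T',P')$ is a support $\tau$-tilting pair over $\Delta$ together with the two vanishing conditions
\[
\Hom_\Delta(T'\otimes_\Delta E, \tau T'_\Delta) = 0 = \Hom_\Delta(P', T'\otimes_\Delta E).
\]
So everything reduces to computing $T'\otimes_\Delta E$ in the two cases. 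For case (1), $T' = T$ is a $\Lambda$-module, so Lemma \ref{3.13} gives $T\otimes_\Delta E = 0$ and both conditions hold vacuously; this yields the clean "if and only if" with no extra hypothesis. For case (2), $T' = T\oplus S$, so $T'\otimes_\Delta E \cong (T\otimes_\Delta E)\oplus(S\otimes_\Delta E) \cong S\otimes_\Delta E \cong M_\Delta$ by Lemmas \ref{3.13} and \ref{3.14}; likewise one must be slightly careful about $\tau T'_\Delta$, but since $S$ is projective over $\Delta$ we have $\tau S_\Delta = 0$, so $\tau T'_\Delta \cong \tau T_\Delta$, and the conditions become $\Hom_\Delta(M_\Delta, \tau T_\Delta) = 0$ and $\Hom_\Delta(P', M_\Delta) = 0$ where $P' = P$. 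Finally $M_\Delta$ is supported on the $\Lambda$-part (it has no $S$-composition factor by construction of $E$), so $\Hom_\Delta(M,\tau T) = \Hom_\Lambda(M,\tau T)$ and $\Hom_\Delta(P, M) = \Hom_\Lambda(P,M)$, giving exactly the stated conditions.

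The only genuinely delicate point — and the step I would write out most carefully — is the bookkeeping with the extra simple $S$: that $S$ lands in the projective slot over $\Delta$, that $\tau S = 0$ so it does not contribute to the Auslander-Reiten translate on the left, and that passing $\Hom$ computations between $\mod\Delta$ and $\mod\Lambda$ is harmless because all the modules involved ($T$, $P$, $\tau T$, $M_\Delta$) are genuine $\Lambda$-modules with no $S$-component. Once that is cleanly set up, the proposition is just Theorem \ref{3.1} plus Lemmas \ref{3.13} and \ref{3.14}. I do not anticipate any real obstacle beyond this organizational care; the content is entirely in Theorem \ref{3.1} and the two lemmas, and the proof is a short case analysis.
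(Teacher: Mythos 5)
Your proposal is correct and follows exactly the paper's route: the paper's own proof is the one-line citation of Theorem \ref{3.1} together with Lemmas \ref{3.13} and \ref{3.14}, and your write-up simply supplies the bookkeeping (the $\Delta$-versus-$\Lambda$ dictionary, $S\otimes_\Delta\Gamma\cong e_a\Gamma$, $\tau S_\Delta=0$, and the computation of $T'\otimes_\Delta E$ in the two cases) that the paper leaves implicit. No gaps.
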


\begin{proof}
It follows Theorem \ref{3.1} and Lemmas \ref{3.13} and \ref{3.14}.
\end{proof}

Putting $P=0$ in Proposition \ref{3.15}, we get the following

\begin{corollary}\label{3.16}
\begin{enumerate}
\item[]
\item[(1)] $T_\Gamma$ is $\tau$-tilting if and only if $T_\Lambda$ is $\tau$-tilting.
\item[(2)] $T_\Gamma \oplus e_{a}\Gamma$ is $\tau$-tilting
in $\mod \Gamma$ if and only if $T$ is $\tau$-tilting in $\mod \Lambda$ and $\Hom_\Lambda(M_\Lambda,\tau T_\Lambda)=0$.
\end{enumerate}
\end{corollary}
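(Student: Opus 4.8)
The plan is to derive Corollary \ref{3.16} as a direct specialization of Proposition \ref{3.15} by setting $P=0$, since the two statements of the corollary correspond exactly to the two statements of the proposition in that case. First I would invoke Proposition \ref{3.15}(1) with $P=0$: the pair $(T_\Gamma, e_a\Gamma)$ being a support $\tau$-tilting pair in $\mod\Gamma$ is equivalent to $(T_\Lambda, 0)$ being a support $\tau$-tilting pair in $\mod\Lambda$. By the remark recorded just before Definition \ref{2.4} (or the sentence following it), $(T,0)$ is a support $\tau$-tilting pair precisely when $T$ is a $\tau$-tilting module. Thus I must also explain why $(T_\Gamma, e_a\Gamma)$ being a support $\tau$-tilting pair is the same as $T_\Gamma$ being $\tau$-tilting as a $\Gamma$-module: this follows because, under the one-point extension, the indecomposable projective $e_a\Gamma$ corresponds to the extra point $a$, so a $\tau$-tilting $\Gamma/\langle e_a\rangle$-module paired with $e_a\Gamma$ is exactly what "$T_\Gamma$ is $\tau$-tilting over $\Gamma/\langle e_a\rangle \cong$ (the algebra $\Lambda$ sitting inside $\Gamma$)" means — but here one wants $T_\Gamma$ to be genuinely $\tau$-tilting over $\Gamma$, not a quotient. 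Here I would lean on the discussion preceding Proposition \ref{3.15}, which notes that the support $\tau$-tilting pairs in $\mod\Delta$ with $\Delta = \Lambda\times k$ are exactly $(T, P\oplus S)$ and $(T\oplus S, P)$; combining this with Lemma \ref{2.8} and Remark \ref{3.12}(3) identifies $(T_\Gamma, e_a\Gamma)$ as a support $\tau$-tilting pair in $\mod\Gamma$ iff $T_\Gamma$ is a $\tau$-tilting $\Gamma$-module.

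For part (2), I would apply Proposition \ref{3.15}(2) with $P = 0$: the pair $(T_\Gamma\oplus e_a\Gamma, 0)$ is a support $\tau$-tilting pair in $\mod\Gamma$ iff $(T_\Lambda, 0)$ is a support $\tau$-tilting pair in $\mod\Lambda$ and $\Hom_\Lambda(M_\Lambda, \tau T_\Lambda) = 0 = \Hom_\Lambda(0, M_\Lambda)$. The second condition $\Hom_\Lambda(0, M_\Lambda)=0$ is vacuous, so it collapses to $\Hom_\Lambda(M_\Lambda, \tau T_\Lambda)=0$. Again $(T_\Gamma\oplus e_a\Gamma, 0)$ being a support $\tau$-tilting pair is the same as $T_\Gamma\oplus e_a\Gamma$ being a $\tau$-tilting $\Gamma$-module, and $(T_\Lambda,0)$ being support $\tau$-tilting is the same as $T$ being $\tau$-tilting in $\mod\Lambda$. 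This yields the stated equivalence.

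The main (and essentially only) obstacle is the bookkeeping around the identification "$(X, e_a\Gamma)$ is a support $\tau$-tilting pair in $\mod\Gamma$" $\Leftrightarrow$ "$X$ is a $\tau$-tilting $\Gamma$-module" and the analogous statement with the extra summand $e_a\Gamma$ absorbed; everything else is just substituting $P=0$ and discarding vacuous $\Hom$ conditions. Since the authors have already set up, immediately before Proposition \ref{3.15}, the precise description of support $\tau$-tilting pairs over $\Delta$ and how they transport along $-\otimes_\Delta\Gamma$, the cleanest write-up is simply: "It follows from Proposition \ref{3.15} by taking $P=0$." I would present it in that terse form, since the corollary genuinely contains no new content beyond the proposition.

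\begin{proof}
It follows from Proposition \ref{3.15} by taking $P=0$ and noting that $\Hom_\Lambda(0,M_\Lambda)=0$ holds trivially, together with the fact that a pair $(X,0)$ is a support $\tau$-tilting pair if and only if $X$ is $\tau$-tilting.
\end{proof}
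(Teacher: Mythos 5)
Your proposal is correct and coincides with the paper's own argument: the authors obtain Corollary~\ref{3.16} by the single remark ``Putting $P=0$ in Proposition~\ref{3.15}, we get the following,'' which is exactly your substitution $P=0$ together with discarding the vacuous condition $\Hom_\Lambda(0,M_\Lambda)=0$. Your extra bookkeeping identifying pairs $(X,0)$ (resp.\ $(X,e_a\Gamma)$) with the corresponding ($\tau$-tilting) modules is consistent with the conventions the paper fixes after Definition~\ref{2.4} and in the paragraph preceding Proposition~\ref{3.15}, so nothing further is needed.
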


If $\Gamma$ is a one-point extension of $\Lambda$ by a non-zero module $M_\Lambda$, then there exists an
idempotent $e\in\Lambda$ such that $\Hom_\Lambda(e\Lambda, M_\Lambda)\neq 0$. Note that there are $\tau$-tilting
$\Lambda/<e>$-modules. So, by Proposition \ref{3.15}(2), we have the following

\begin{corollary}\label{3.17}
Let $\Gamma$ be a one-point extension of $\Lambda$ by a non-zero module $M_\Lambda$.
Then there exists a support $\tau$-tilting $\Lambda$-module such that $T_\Gamma \oplus e_{a}\Gamma$ is not  support $\tau$-tilting.
\end{corollary}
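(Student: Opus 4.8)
The plan is to deduce Corollary~\ref{3.17} directly from Proposition~\ref{3.15}(2) by producing, for the given one-point extension $\Gamma=\Lambda[M]$ with $M_\Lambda\neq 0$, a support $\tau$-tilting $\Lambda$-module $T$ for which the vanishing condition $\Hom_\Lambda(M_\Lambda,\tau T_\Lambda)=0=\Hom_\Lambda(P_\Lambda,M_\Lambda)$ fails (here $P=0$, so only the first vanishing is at issue). The idea is to pick the ``trivial'' support $\tau$-tilting module $T$ supported on a small idempotent so that $\tau T$ is forced to be nonzero and large enough to receive a nonzero map from $M$.

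First I would use the hypothesis $M_\Lambda\neq 0$ to find an idempotent $e\in\Lambda$ with $\Hom_\Lambda(e\Lambda,M_\Lambda)\neq 0$: since $M\neq 0$, the top of $M$ is a nonzero semisimple module, so some simple $\Lambda$-module $S_e=\mathrm{top}(e\Lambda/\mathrm{rad}\,e\Lambda)$ appears in it, whence $\Hom_\Lambda(e\Lambda,M_\Lambda)\neq 0$ (the projective cover of a composition factor of $\mathrm{top}\,M$ maps onto that factor, which lifts to a map into $M$; alternatively argue with $\mathrm{soc}$). Fix such an $e$. Next I would invoke the remark preceding Corollary~\ref{3.16} (stated in the excerpt just before Proposition~\ref{3.15}), namely that $\Lambda/\langle e\rangle$ has $\tau$-tilting modules — for instance $\Lambda/\langle e\rangle$ itself is a $\tau$-tilting $\Lambda/\langle e\rangle$-module, hence a basic support $\tau$-tilting $\Lambda$-module; call it $T$.

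The key point is then that for this choice $\Hom_\Lambda(M_\Lambda,\tau T_\Lambda)\neq 0$, so condition (2) of Proposition~\ref{3.15}(2) is violated and therefore $T_\Gamma\oplus e_a\Gamma$ is not a support $\tau$-tilting pair in $\mod\Gamma$, i.e.\ $T_\Gamma\oplus e_a\Gamma$ is not support $\tau$-tilting. The cleanest way to see the nonvanishing is to relate it back to $\Hom_\Lambda(e\Lambda,M_\Lambda)$. I expect the main obstacle to be exactly this step: translating ``$\Hom_\Lambda(e\Lambda,M)\neq 0$'' into ``$\Hom_\Lambda(M,\tau T)\neq 0$'' for the support $\tau$-tilting module $T=\Lambda/\langle e\rangle$. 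The route I would take is via Lemma~\ref{2.3}: a minimal projective presentation of $T=\Lambda/\langle e\rangle$ as a $\Lambda$-module has $P_0=(1-e)\Lambda$ and the map $f_0\colon P_1\to P_0$ has image contained in $\mathrm{rad}\,P_0$ but, because the relations forcing $e$ to act as $0$ come exactly from the arrows into the vertices of $e$, the cokernel of $\Hom_\Lambda(f_0,M)$ is controlled by $\Hom_\Lambda(e\Lambda,M)\neq 0$; hence $\Hom_\Lambda(f_0,M)$ is not epic, and Lemma~\ref{2.3} gives $\Hom_\Lambda(M,\tau T)\neq 0$.

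Alternatively — and this may be the smoother writeup — I would avoid computing $\tau T$ by arguing contrapositively through Proposition~\ref{3.15}(2) itself: if \emph{every} support $\tau$-tilting $\Lambda$-module $T$ satisfied $\Hom_\Lambda(M,\tau T)=0$, then in particular $\Lambda/\langle e\rangle$ would, forcing (by the theory of $\tau$-rigid modules, e.g.\ Bongartz-type completion) that $M$ is ``compatible'' with $\Lambda/\langle e\rangle$ and ultimately that $\Hom_\Lambda(e\Lambda,M)=0$ for the relevant $e$, contradicting the choice of $e$. Either way, the skeleton is: (i) extract $e$ from $M\neq 0$; (ii) take $T=\Lambda/\langle e\rangle$, a support $\tau$-tilting $\Lambda$-module; (iii) show the Hom-vanishing in Proposition~\ref{3.15}(2) fails for this $T$; (iv) conclude. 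Step (iii) is where the real work sits, and I would keep it short by citing Lemma~\ref{2.3} on the minimal projective presentation of $\Lambda/\langle e\rangle$.
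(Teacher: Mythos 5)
There is a genuine gap, and it sits exactly where you yourself flagged the ``real work'': step (iii). Your plan is to violate the \emph{first} vanishing condition of Proposition \ref{3.15}(2) by showing $\Hom_\Lambda(M,\tau T_\Lambda)\neq 0$ for $T=\Lambda/\langle e\rangle$, but that claim is false in general. Take $\Lambda=k(1\to 2)$ and $M=S_1$: then $e=e_1$, and $T=\Lambda/\langle e_1\rangle\cong S_2$ is \emph{projective}, so $\tau T=0$ and $\Hom_\Lambda(M,\tau T)=0$. (More drastically, if $\Lambda$ is semisimple then $\tau T=0$ for every $T$, so no choice of $T$ can violate the first condition.) Your Lemma \ref{2.3} heuristic breaks down because in a minimal projective presentation of $\Lambda/\langle e\rangle$ the term $P_1$ is the projective cover of $(1-e)\Lambda e\Lambda$, which may be zero; the ``relations forcing $e$ to act as $0$'' need not make $\Hom_\Lambda(f_0,M)$ fail to be epic. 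There is also an internal inconsistency in the setup: you declare $P=0$, but the support $\tau$-tilting pair attached to the module $T=\Lambda/\langle e\rangle$ is $(\Lambda/\langle e\rangle,\,e\Lambda)$ with $e\Lambda\neq 0$ (your own choice of $e$ forces $e\neq 0$), so in fact $P=e\Lambda$. The ``alternative'' contrapositive route via Bongartz-type completion is only a sketch and does not repair this.

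Once $P=e\Lambda$ is recorded correctly, the corollary is immediate, and this is precisely the paper's one-line argument: choose $e$ with $\Hom_\Lambda(e\Lambda,M_\Lambda)\neq 0$ (possible since $M\neq 0$, exactly as in your step (i)); take any $\tau$-tilting $\Lambda/\langle e\rangle$-module $T$, so that $(T,e\Lambda)$ is a support $\tau$-tilting pair in $\mod\Lambda$; then the \emph{second} vanishing condition of Proposition \ref{3.15}(2) already fails, because $\Hom_\Lambda(P,M)=\Hom_\Lambda(e\Lambda,M)\neq 0$. No computation of $\tau T$ is needed at all, and indeed in the example above it is only this second condition that fails.
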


\section{Examples}

In this section, we give two examples to illustrate the results obtained in Section 3.
All indecomposable modules are denoted by their Loewy series.

\begin{example}\label{4.1}
{\rm Let $\Sigma$ be a finite dimensional $k$-algebra given by the quiver
$$1 {\longrightarrow} 2 {\longrightarrow} 3.$$ Then ${T=\smallmatrix 1
\endsmallmatrix}{\smallmatrix 1\\2\\3
\endsmallmatrix}{\smallmatrix 3
\endsmallmatrix}$ is a tilting $\Sigma$-module. The endomorphism algebra $\Lambda$ of $T$ is a tilted algebra given by the quiver
$$1 \stackrel{\alpha}{\longrightarrow} 2 \stackrel{\beta}{\longrightarrow} 3$$
with the relation $\alpha\beta=0$. The cluster-titlted algebra $\Gamma$ corresponding to $\Lambda$ is given by
the following quiver
$$\xymatrix{
 & 2  \ar^{\beta}[rd]  &   \\
1 \ar[ru]^{\alpha} &     &  3\ar^{\gamma}[ll]
}$$
with relations $\alpha\beta=0$, $\beta\gamma=0$ and $\gamma\alpha=0$, and $\Gamma$ is a split-by-nilpotent extension of $\Lambda$.

Note that ${\smallmatrix 3\endsmallmatrix}$ is the unique indecomposable module in $\mod \Lambda$ with injective dimension two.
So for any indecomposable module $W$ not isomorphic to ${\smallmatrix 3\endsmallmatrix}$, we have $\tau^{-1}\Omega^{-1}W=0$.
Because $$\xymatrix{
{\smallmatrix 0
\endsmallmatrix}\ar[r] &{\smallmatrix 3
\endsmallmatrix}\ar[r] & {\smallmatrix 2\\3
\endsmallmatrix}\ar[r] & {\smallmatrix 1\\2
\endsmallmatrix}\ar[r] & {\smallmatrix 1
\endsmallmatrix}\ar[r] &{\smallmatrix 0
\endsmallmatrix}
}$$ is a minimal injective resolution of $3$,
we have $\tau^{-1}\Omega^{-1}{\smallmatrix 3
\endsmallmatrix} =\tau^{-1}{\smallmatrix 2
\endsmallmatrix} ={\smallmatrix 1
\endsmallmatrix}$.

Let $(T_i,P_i)$ be a support $\tau$-tilting pair in $\mod \Lambda$ and $\widetilde{T}_i:=T_i\otimes_\Lambda \Gamma$ for each $i$.
We list $\widetilde{T}_i$, $\tau^{-1}\Omega^{-1}T_i$ and $\Hom_\Lambda(P_i,\tau^{-1}\Omega^{-1}T_i)$ in the following table.

\vspace{0.2cm}

\begin{tabularx}{\textwidth}{|X|X|X|c|c|}
$T_{i}$& $P_{i}$ & $\widetilde{T}_i=T_i\otimes \Gamma$ &$\tau^{-1}\Omega^{-1}T_i$ &$\Hom_\Lambda(P_i,\tau^{-1}\Omega^{-1}T_i)$ \\ \hline
${T_1=\smallmatrix 1\\2
\endsmallmatrix}{\smallmatrix 2\\3
\endsmallmatrix}{\smallmatrix 3
\endsmallmatrix}$&${\smallmatrix 0
\endsmallmatrix}$&${\widetilde{T}_1=\smallmatrix 1\\2
\endsmallmatrix}{\smallmatrix 2\\3
\endsmallmatrix}{\smallmatrix 3\\1
\endsmallmatrix}$&${\smallmatrix 1
\endsmallmatrix}$&${\smallmatrix 0
\endsmallmatrix}$\\
${T_2=\smallmatrix 1\\2
\endsmallmatrix}{\smallmatrix 2\\3
\endsmallmatrix}{\smallmatrix 2
\endsmallmatrix}$&${\smallmatrix 0
\endsmallmatrix}$&${\widetilde{T}_2=\smallmatrix 1\\2
\endsmallmatrix}{\smallmatrix 2\\3
\endsmallmatrix}{\smallmatrix 2
\endsmallmatrix}$&${\smallmatrix 0
\endsmallmatrix}$&${\smallmatrix 0
\endsmallmatrix}$\\
${T_3=\smallmatrix 1\\2
\endsmallmatrix}{\smallmatrix 1
\endsmallmatrix}{\smallmatrix 3
\endsmallmatrix}$&${\smallmatrix 0
\endsmallmatrix}$&${\widetilde{T}_3=\smallmatrix 1\\2
\endsmallmatrix}{\smallmatrix 1
\endsmallmatrix}{\smallmatrix 3\\1
\endsmallmatrix}$&${\smallmatrix 1
\endsmallmatrix}$&${\smallmatrix 0
\endsmallmatrix}$\\ \hline
${T_4=\smallmatrix 2\\3
\endsmallmatrix}{\smallmatrix 3
\endsmallmatrix}$&${\smallmatrix 1\\2
\endsmallmatrix}$&${\widetilde{T}_4=\smallmatrix 2\\3
\endsmallmatrix}{\smallmatrix 3\\1
\endsmallmatrix}$&${\smallmatrix 1
\endsmallmatrix}$&${\smallmatrix \neq 0
\endsmallmatrix}$\\ \hline
${T_5=\smallmatrix 1\\2
\endsmallmatrix}{\smallmatrix 2
\endsmallmatrix}$&${\smallmatrix 3
\endsmallmatrix}$&${\widetilde{T}_5=\smallmatrix 1\\2
\endsmallmatrix}{\smallmatrix 2
\endsmallmatrix}$&${\smallmatrix 0
\endsmallmatrix}$&${\smallmatrix 0
\endsmallmatrix}$\\
${T_6=\smallmatrix 1
\endsmallmatrix}{\smallmatrix 3
\endsmallmatrix}$&${\smallmatrix 2\\3
\endsmallmatrix}$&${\widetilde{T}_6=\smallmatrix 1
\endsmallmatrix}{\smallmatrix 3\\1
\endsmallmatrix}$&${\smallmatrix 1
\endsmallmatrix}$&${\smallmatrix 0
\endsmallmatrix}$\\
${T_7=\smallmatrix 1\\2
\endsmallmatrix}{\smallmatrix 1
\endsmallmatrix}$&${\smallmatrix 3
\endsmallmatrix}$&${\widetilde{T}_7=\smallmatrix 1\\2
\endsmallmatrix}{\smallmatrix 1
\endsmallmatrix}$&${\smallmatrix 0
\endsmallmatrix}$&${\smallmatrix 0
\endsmallmatrix}$\\
${T_8=\smallmatrix 2\\3
\endsmallmatrix}{\smallmatrix 2
\endsmallmatrix}$&${\smallmatrix 1\\2
\endsmallmatrix}$&${\widetilde{T}_8=\smallmatrix 2\\3
\endsmallmatrix}{\smallmatrix 2
\endsmallmatrix}$&${\smallmatrix 0
\endsmallmatrix}$&${\smallmatrix 0
\endsmallmatrix}$\\
${T_9=\smallmatrix 1
\endsmallmatrix}$&${\smallmatrix 2\\3
\endsmallmatrix}{\smallmatrix  3
\endsmallmatrix}$&${\widetilde{T}_9=\smallmatrix 1
\endsmallmatrix}$&${\smallmatrix 0
\endsmallmatrix}$&${\smallmatrix 0
\endsmallmatrix}$\\
${T_{10}=\smallmatrix 2
\endsmallmatrix}$&${\smallmatrix 1\\2
\endsmallmatrix}{\smallmatrix  3
\endsmallmatrix}$&${\widetilde{T}_{10}=\smallmatrix 2
\endsmallmatrix}$&${\smallmatrix 0
\endsmallmatrix}$&${\smallmatrix 0
\endsmallmatrix}$\\ \hline
${T_{11}=\smallmatrix 3
\endsmallmatrix}$&${\smallmatrix 1\\2
\endsmallmatrix}{\smallmatrix  2\\3
\endsmallmatrix}$&${\widetilde{T}_{11}=\smallmatrix 3\\1
\endsmallmatrix}$&${\smallmatrix 1
\endsmallmatrix}$&${\smallmatrix \neq0
\endsmallmatrix}$\\ \hline
${T_{12}=\smallmatrix 0
\endsmallmatrix}$&${\smallmatrix 1\\2
\endsmallmatrix}{\smallmatrix  2\\3
\endsmallmatrix}{\smallmatrix  3
\endsmallmatrix}$&${\widetilde{T}_{12}=\smallmatrix 0
\endsmallmatrix}$&${\smallmatrix 0
\endsmallmatrix}$&${\smallmatrix 0
\endsmallmatrix}$\\
\end{tabularx}

\vspace{0.2cm}

A simple calculation yields
$$\Hom_\Lambda(\tau^{-1}\Omega^{-1}T_1, \tau T_1)=0,$$
$$\Hom_\Lambda(\tau^{-1}\Omega^{-1}T_3, \tau T_3)\cong\Hom_\Lambda(\tau^{-1}\Omega^{-1}T_3, \tau {\smallmatrix 1
\endsmallmatrix})\cong\Hom_\Lambda({\smallmatrix 1\endsmallmatrix}, {\smallmatrix2\endsmallmatrix})=0,$$
$$\Hom_\Lambda(\tau^{-1}\Omega^{-1}T_6, \tau T_6)\cong\Hom_\Lambda(\tau^{-1}\Omega^{-1}T_6, \tau {\smallmatrix 1
\endsmallmatrix})\cong\Hom_\Lambda({\smallmatrix 1\endsmallmatrix}, {\smallmatrix 2
\endsmallmatrix})=0.$$
Thus all $\widetilde{T}_1$, $\widetilde{T}_2$, $\widetilde{T}_3$, $\widetilde{T}_5$, $\widetilde{T}_6$,
$\widetilde{T}_7$, $\widetilde{T}_8$, $\widetilde{T}_9$, $\widetilde{T}_{10}$ and $\widetilde{T}_{12}$
are support $\tau$-tilting, and neither $\widetilde{T}_4$ nor $\widetilde{T}_{11}$ is support $\tau$-tilting by Proposition \ref{3.4}.
We draw the Hasse quivers $Q(s\tau$-$\tilt \Lambda)$ and $Q(s\tau$-$\tilt \Gamma)$ as follows, where $M_{(T_i)}$ stands for $(T_i=M)$.
\[\xymatrix{ {\smallmatrix Q(s\tau{\text -}\tilt \Lambda):
\endsmallmatrix} &{\smallmatrix 1\\2
\endsmallmatrix}{\smallmatrix 2\\3
\endsmallmatrix}{\smallmatrix 2
\endsmallmatrix}_{(T_2)}\ar@{~>}[r]\ar@{~>}[rrd]&{\smallmatrix 1\\2
\endsmallmatrix}{\smallmatrix 2
\endsmallmatrix}_{(T_5)}\ar@{~>}[r]\ar@{~>} [rrd]&{\smallmatrix 1\\2
\endsmallmatrix}{\smallmatrix 1
\endsmallmatrix}_{(T_7)}\ar@{~>}[r]&{\smallmatrix 1
\endsmallmatrix}_{(T_9)}\ar@{~>}[rd]&\\
{\smallmatrix 1\\2
\endsmallmatrix}{\smallmatrix 2\\3
\endsmallmatrix}{\smallmatrix 3
\endsmallmatrix}_{(T_1)}\ar@{~>}[r]\ar@{~>}[ur]\ar[rd]&{\smallmatrix 1\\2
\endsmallmatrix}{\smallmatrix 1
\endsmallmatrix}{\smallmatrix 3
\endsmallmatrix}_{(T_3)}\ar@{~>}[r]\ar@{~>}[rru]&{\smallmatrix 1
\endsmallmatrix}{\smallmatrix 3
\endsmallmatrix}_{(T_6)}\ar@{~>}[rru]\ar[rrd]&{\smallmatrix 2\\3
\endsmallmatrix}{\smallmatrix 2
\endsmallmatrix}_{(T_8)}\ar@{~>}[r]&{\smallmatrix 2
\endsmallmatrix}_{(T_{10})}\ar@{~>}[r]&{\smallmatrix 0
\endsmallmatrix}_{(T_{12})}\\
&{\smallmatrix 2\\3
\endsmallmatrix}{\smallmatrix 3
\endsmallmatrix}_{(T_4)}\ar[rrr]\ar[rru]&&&{\smallmatrix 3
\endsmallmatrix}_{(T_{11}),}\ar[ru]&
}\]

\[\xymatrix{ {\smallmatrix Q(s\tau{\text -}\tilt \Gamma):
\endsmallmatrix} &{\smallmatrix 1\\2
\endsmallmatrix}{\smallmatrix 2\\3
\endsmallmatrix}{\smallmatrix 2
\endsmallmatrix}_{(\widetilde{T_2})}\ar@{~>}[r]\ar@{~>}[rrd]&{\smallmatrix 1\\2
\endsmallmatrix}{\smallmatrix 2
\endsmallmatrix}_{(\widetilde{T_5})}\ar@{~>}[r]\ar@{~>} [rrd]&{\smallmatrix 1\\2
\endsmallmatrix}{\smallmatrix 1
\endsmallmatrix}_{(\widetilde{T_7})}\ar@{~>}[r]&{\smallmatrix 1
\endsmallmatrix}_{(\widetilde{T_9})}\ar@{~>}[rd]&\\
{\smallmatrix 1\\2
\endsmallmatrix}{\smallmatrix 2\\3
\endsmallmatrix}{\smallmatrix 3\\1
\endsmallmatrix}_{(\widetilde{T_1})}\ar@{~>}[r]\ar@{~>}[ur]\ar[rd]&{\smallmatrix 1\\2
\endsmallmatrix}{\smallmatrix 1
\endsmallmatrix}{\smallmatrix 3\\1
\endsmallmatrix}_{(\widetilde{T_3})}\ar@{~>}[r]\ar@{~>}[rru]&{\smallmatrix 1
\endsmallmatrix}{\smallmatrix 3\\1
\endsmallmatrix}_{(\widetilde{T_6})}\ar@{~>}[rru]\ar[rd]&{\smallmatrix 2\\3
\endsmallmatrix}{\smallmatrix 2
\endsmallmatrix}_{(\widetilde{T_8})}\ar@{~>}[r]&{\smallmatrix 2
\endsmallmatrix}_{(\widetilde{T_{10}})}\ar@{~>}[r]&{\smallmatrix 0
\endsmallmatrix}_{(\widetilde{T_{12}})}\\
&{\smallmatrix 3
\endsmallmatrix}{\smallmatrix 2\\3
\endsmallmatrix}{\smallmatrix 3\\1
\endsmallmatrix}\ar[rr]\ar[rd]&&{\smallmatrix 3\\1
\endsmallmatrix}{\smallmatrix 3
\endsmallmatrix}\ar[rd]&&\\
&&{\smallmatrix 2\\3
\endsmallmatrix}{\smallmatrix 3
\endsmallmatrix}\ar[rr]\ar[ruu]&&{\smallmatrix 3.
\endsmallmatrix}\ar[ruu]&
}\]
We draw those arrows in  $fQ(s\tau$-$\tilt \Gamma)$ and $fQ(s\tau$-$\tilt \Lambda)$
by\xymatrix{\ar@{~>}[r] &}. Their underlying graphs and corresponding arrows are identical.}
\end{example}

\begin{example}\label{4.2}
{\rm Let $\Lambda$ be a finite dimensional $k$-algebra given by the quiver
$$2 {\longrightarrow} 3.$$ Considering the one-point extension of $\Lambda$ by the simple module corresponding to the point $2$,
the algebra $\Gamma=\Lambda[2]$ is given by the quiver
$$1 \stackrel{\alpha}{\longrightarrow} 2 \stackrel{\beta}{\longrightarrow} 3$$ with the relation  $\alpha\beta=0$.
Let $\Delta=\Lambda \times k$. The following is the Hasse quiver of $\Lambda$.
\[\xymatrix{{\smallmatrix 2\\3
\endsmallmatrix}{\smallmatrix 3
\endsmallmatrix}\ar[r]\ar[rd] &({\smallmatrix 3
\endsmallmatrix},{\smallmatrix 2\\3
\endsmallmatrix})\ar[rr]&&({\smallmatrix 0
\endsmallmatrix},{\smallmatrix 2\\3
\endsmallmatrix}{\smallmatrix 3
\endsmallmatrix})\\
 &{\smallmatrix 2\\3
\endsmallmatrix}{\smallmatrix 2
\endsmallmatrix}\ar[r]&({\smallmatrix 2
\endsmallmatrix},{\smallmatrix 3
\endsmallmatrix}).\ar[ru]&
}\]
By Proposition \ref{3.15}(1), all ${\smallmatrix 2\\3
\endsmallmatrix}{\smallmatrix 3
\endsmallmatrix},{\smallmatrix 3
\endsmallmatrix}, {\smallmatrix 0
\endsmallmatrix}, {\smallmatrix 2\\3
\endsmallmatrix}{\smallmatrix 2
\endsmallmatrix}$ and ${\smallmatrix 2
\endsmallmatrix}$ are support $\tau$-tilting $\Gamma$-modules. From support $\tau$-tilting $\Lambda$-modules ${\smallmatrix 3
\endsmallmatrix}$ and ${\smallmatrix 0\endsmallmatrix}$, it is easy to get two support $\tau$-tilting $\Delta$-pairs $({\smallmatrix 3
\endsmallmatrix}{\smallmatrix 1
\endsmallmatrix}, {\smallmatrix 2\\3
\endsmallmatrix})$ and $({\smallmatrix 1
\endsmallmatrix}, {\smallmatrix 2\\3
\endsmallmatrix}{\smallmatrix 3
\endsmallmatrix})$.
Since $\Hom_\Lambda({\smallmatrix 2\\3
\endsmallmatrix}, {\smallmatrix2
\endsmallmatrix})\neq 0$, it follows from Proposition \ref{3.15}(2) that neither
${\smallmatrix 3
\endsmallmatrix}{\smallmatrix 1\\2
\endsmallmatrix}$ nor ${\smallmatrix 1\\2
\endsmallmatrix}$ is a support $\tau$-tilting $\Gamma$-module.
A simple calculation yields that all ${\smallmatrix 2\\3
\endsmallmatrix}{\smallmatrix 3
\endsmallmatrix}{\smallmatrix 1\\2
\endsmallmatrix}$, ${\smallmatrix 2\\3
\endsmallmatrix}{\smallmatrix 2
\endsmallmatrix}{\smallmatrix 1\\2
\endsmallmatrix}$ and ${\smallmatrix 2
\endsmallmatrix}{\smallmatrix 1\\2
\endsmallmatrix}$ are support $\tau$-tilting $\Gamma$-modules also by Proposition \ref{3.15}(2).

Now we draw $Q(s\tau$-$\tilt \Delta)$ and $Q(s\tau$-$\tilt \Gamma)$ as follows.
\[\xymatrix{ {\smallmatrix Q(s\tau-\tilt \Delta):
\endsmallmatrix} &{\smallmatrix 1
\endsmallmatrix}{\smallmatrix 2\\3
\endsmallmatrix}{\smallmatrix 2
\endsmallmatrix}\ar@{~>}[r]\ar@{~>}[rrd]&{\smallmatrix 1
\endsmallmatrix}{\smallmatrix 2
\endsmallmatrix}\ar[rr]\ar@{~>} [rrd]&&{\smallmatrix 1
\endsmallmatrix}\ar[rd]&\\
{\smallmatrix 1
\endsmallmatrix}{\smallmatrix 2\\3
\endsmallmatrix}{\smallmatrix 3
\endsmallmatrix}\ar[rr]\ar@{~>}[ur]\ar@{~>}[rd]&&{\smallmatrix 1
\endsmallmatrix}{\smallmatrix 3
\endsmallmatrix}\ar[rru]\ar[rrd]&{\smallmatrix 2\\3
\endsmallmatrix}{\smallmatrix 2
\endsmallmatrix}\ar@{~>}[r]&{\smallmatrix 2
\endsmallmatrix}\ar@{~>}[r]&{\smallmatrix 0
\endsmallmatrix}\\
&{\smallmatrix 2\\3
\endsmallmatrix}{\smallmatrix 3
\endsmallmatrix}\ar@{~>}[rrr]\ar@{~>}[rru]&&&{\smallmatrix 3,
\endsmallmatrix}\ar@{~>}[ru]&
}\]
\[\xymatrix{ {\smallmatrix Q(s\tau-\tilt \Gamma):
\endsmallmatrix} &{\smallmatrix 1\\2
\endsmallmatrix}{\smallmatrix 2\\3
\endsmallmatrix}{\smallmatrix 2
\endsmallmatrix}\ar@{~>}[r]\ar@{~>}[rrd]&{\smallmatrix 1\\2
\endsmallmatrix}{\smallmatrix 2
\endsmallmatrix}\ar[r]\ar@{~>} [rrd]&{\smallmatrix 1\\2
\endsmallmatrix}{\smallmatrix 1
\endsmallmatrix}\ar[r]&{\smallmatrix 1
\endsmallmatrix}\ar[rd]&\\
{\smallmatrix 1\\2
\endsmallmatrix}{\smallmatrix 2\\3
\endsmallmatrix}{\smallmatrix 3
\endsmallmatrix}\ar[r]\ar@{~>}[ur]\ar@{~>}[rd]&{\smallmatrix 1\\2
\endsmallmatrix}{\smallmatrix 1
\endsmallmatrix}{\smallmatrix 3
\endsmallmatrix}\ar[r]\ar[rru]&{\smallmatrix 1
\endsmallmatrix}{\smallmatrix 3
\endsmallmatrix}\ar[rru]\ar[rrd]&{\smallmatrix 2\\3
\endsmallmatrix}{\smallmatrix 2
\endsmallmatrix}\ar@{~>}[r]&{\smallmatrix 2
\endsmallmatrix}\ar@{~>}[r]&{\smallmatrix 0
\endsmallmatrix}\\
&{\smallmatrix 2\\3
\endsmallmatrix}{\smallmatrix 3
\endsmallmatrix}\ar@{~>}[rrr]\ar@{~>}[rru]&&&{\smallmatrix 3.
\endsmallmatrix}\ar@{~>}[ru]&
}\]
We also draw those arrows in  $fQ(s\tau$-$\tilt \Delta)$ and $fQ(s\tau$-$\tilt \Gamma)$
by\xymatrix{\ar@{~>}[r] &}. Their underlying graphs and corresponding arrows are identical.}
\end{example}

\vspace{0.5cm}

{\bf Acknowledgements.} The authors thank the referee for the useful and detailed suggestions.

{\small
}
\end{document}